\theoremstyle{plain}
\newtheorem{thm}{Theorem}[section]
\newtheorem{lem}[thm]{Lemma}
\theoremstyle{definition}
\theoremstyle{remark}
\newtheorem*{remark*}{Remark}
\numberwithin{equation}{section}
\begin{document}

\title[On a problem of geometry of numbers arising
in spectral theory]{On a problem of geometry of numbers arising in spectral theory}

\author[Y. A. Kordyukov]{Yuri A. Kordyukov}
\address{Institute of Mathematics\\
         Russian Academy of Sciences\\
         112~Chernyshevsky str.\\ 450008 Ufa\\ Russia} \email{yurikor@matem.anrb.ru}

\author[A. A. Yakovlev]{Andrey A. Yakovlev}

\address{Institute of Mathematics\\
         Russian Academy of Sciences\\
         112~Chernyshevsky str.\\ 450008 Ufa\\ Russia}

\email{yakovlevandrey@yandex.ru}

\subjclass[2000]{Primary 11P21; Secondary 58J50}

\keywords{integer points, lattices, domains, convexity, adiabatic
limits, foliation, Laplace operator}

\begin{abstract}
We study a lattice point counting problem for a class of families of domains in a Euclidean space. This class consists of anisotropically expanding bounded domains, which remain unchanged along some fixed linear subspace and expand in directions, orthogonal to this subspace. We find the leading term in the asymptotics of the number of lattice points in such  family of domains and prove remainder estimates in this asymptotics under various conditions on the lattice and the family of domains.

As a consequence, we prove an asymptotic formula for the eigenvalue distribution function of the Laplace operator on a flat torus in adiabatic limit determined by a linear foliation with a nontrivial remainder estimate.
\end{abstract}

\dedicatory{Dedicated to the memory of Arlen Mikhailovich Ilyin}

\thanks{Supported by the Russian Foundation of Basic Research
(grant no. 12-01-00519-a)}

\date{}

\maketitle


\section{Introduction}

\subsection{Statement of the problem}\label{s:setting} The paper is devoted to lattice point counting problems for some families of anisotropically expanding domains in a Euclidean space. More precisely,  consider a finite-dimensional
real vector space $E$ endowed with an inner product $(\cdot,\cdot)$.
Let $F$ be a $p$-dimensional linear subspace of $E$ and $H=F^{\bot}$
the $q$-dimensional orthogonal complement of $F$, $p+q=n:=\dim E$.
For any $\varepsilon>0$, define a linear transformation $T_\varepsilon :
E\to E$ by
\begin{equation}\label{e:Te}
T_\varepsilon(x)=\begin{cases} x, & \text{if}\ x\in F, \\
\varepsilon^{-1}x, & \text{if}\ x\in H.
\end{cases}
\end{equation}

Let $\Gamma\subset E$ be a lattice in $E$. For any bounded open subset
$S$ in $E$, we denote by $n_\varepsilon (S)$ the number of points of
$\Gamma$ in the set $T_\varepsilon(S)$:
\begin{equation}\label{e:nS}
n_\varepsilon (S)=\# (T_\varepsilon(S)\cap \Gamma), \quad
\varepsilon>0.
\end{equation}

The main purpose of the paper is to study the asymptotic behavior of
$n_\varepsilon (S)$ as $\varepsilon\to 0$. In the particular case when
$E$ is the linear space ${\mathbb R}^n$ equipped with the standard Euclidean structure and
$\Gamma={\mathbb Z}^n$, this problem was studied in
\cite{lattice-points,mz-varchenko,varchenko}. Lattice
point counting problems for families of anisotropically expanding domains were
also investigated in considerable detail in the papers
\cite{Skriganov89,Skriganov94,Nikichine-Skriganov95,Nikichine-Skriganov98}. These papers are
devoted to the study of anomalously small, particularly
logarithmically small, errors in the lattice point counting problem (see also
the introduction of \cite{lattice-points}).

\subsection{Adiabatic limits}\label{s:adiab1} 
The problem mentioned above appears naturally in the study of the following asymptotic problem in spectral theory of differential operators.

Consider a flat torus $\mathbb T=\mathcal E/\Lambda$,
where $\mathcal E$ is a real $n$-dimen\-si\-o\-nal Euclidean space equipped with a Euclidean metric $g$ and $\Lambda$ is a lattice in 
$\mathcal E$. Let $F$ be a linear subspace in $\mathcal E$ and $\mathcal F$ be the corresponding linear foliation on $\mathbb T$: the leaf
$L_x$ of $\mathcal F$ through a point $x\in \mathbb T$ has the form:
\[
L_x=x+F \mod \Lambda.
\]
The direct sum decomposition $
\mathcal E=F\bigoplus H$, $H:=F^\bot$, of $\mathcal E$  induces the corresponding decomposition $g=g_{F}+g_{H}$ of the metric $g$. We define a one-parameter family $g_{\varepsilon}$ of Euclidean metrics on $\mathcal E$ by
\[
g_{\varepsilon}=g_{F} + {\varepsilon}^{-2}g_{H}, \quad \varepsilon >
0.
\]
We will also consider $g_{\varepsilon}$ as a Riemannian metric on  $\mathbb{T}$.

For any $\varepsilon>0$, consider the Laplace-Beltrami operator 
$\Delta_\varepsilon$ on $C^\infty(\mathbb T)$ associated with $g_\varepsilon$. In the linear coordinates 
$(x_1,x_2,\ldots,x_n)$ on $\mathbb{T}$ given by a basis
$(e_1,e_2,\ldots,e_n)$ in $\mathcal E$, the operator 
$\Delta_\varepsilon$ is written as 
\[
\Delta_\varepsilon =-\sum_{j,\ell=1}^n g_\varepsilon^{j\ell}
\frac{\partial^2}{\partial x_j\partial x_\ell},
\]
where $g_\varepsilon^{j\ell}$ are the elements of the inverse matrix of the matrix of the Euclidean metric $g_\varepsilon $ in the basis $(e_1,e_2,\ldots,e_n)$.

The problem is to study the asymptotic behavior of eigenvalues of the operator $\Delta_\varepsilon$ as $\varepsilon$. Such a limiting procedure for the eigenvalue problem for the Laplace operator on a compact Riemannian manifold is often called the passage to adiabatic limit. In the geometric setting, the notion of adiabatic limit was introduced by Witten for the Dirac operator on the total space of a fiber bundle over the circle in 1985 in the study of global anomalies in string theory. In quantum mechanics, adiabatic limits are closely related with so-called Born-Oppenheimer approximation. We refer the reader to a survey paper \cite{bedlewo2-andrey} for historic remarks and references.

Let $\mathcal E^*$ denote the dual Euclidean space of $\mathcal E$ and $\Lambda^*\subset \mathcal E^*$ the dual lattice of $\Lambda$:
\[
\Lambda^*=\{k\in \mathcal E^* : \langle k,\Lambda\rangle
\subset\mathbb Z\}.
\]
The operator $\Delta_\varepsilon$ has a complete orthonormal system of eigenfunctions 
\[
U_{k}(x)=e^{2\pi i \langle k,x\rangle }, \quad x\in \mathbb T,\quad
k\in \Lambda^*\subset \mathcal E^*,
\]
with the corresponding eigenvalues 
\[
\lambda_{k}=4\pi^2 |k|^2_{g_\varepsilon^{-1}} =4\pi^2
|T^{-1}_\varepsilon(k)|^2_{g^{-1}},
\]
where $g_\varepsilon^{-1}$ is the metric on $\mathcal E^*$ induced by $g_\varepsilon$, $T_\varepsilon$ is the linear operator on $\mathcal E^*$ defined by the decomposition $ \mathcal E^*=F^*\bigoplus H^*$ via the formula \eqref{e:Te}.

Therefore, the eigenvalue distribution function $N_\varepsilon(\lambda)$ of $\Delta_\varepsilon$ defined by 
\[
N_\varepsilon(\lambda)=\sharp\{k\in \Lambda^* : \lambda_{k}<\lambda
\}, \quad \lambda \in {\mathbb R},
\]
is described as
\begin{equation}
\label{e:Ne} N_\varepsilon (4\pi^2\lambda)=n_\varepsilon
(B_{\sqrt{\lambda}}(0)), \quad \lambda \in {\mathbb R},
\end{equation}
where $n_\varepsilon (B_{\sqrt{\lambda}}(0))$ is the number of points of the lattice $\Lambda^*$ in the ellipsoid $T_\varepsilon(B_{\sqrt{\lambda}}(0))$ (cf. \eqref{e:nS}). In particular, the problem on the asymptotic behavior of $N_\varepsilon(\lambda)$ as $\varepsilon\to 0$ is equivalent to the problem formulated in Section~\ref{s:setting} in the case when 
$S$ is the ball $B_{\sqrt{\lambda}}(0)$.

In~\cite{adiab} (see also \cite{asymp}) the  first author studied the asymptotic behavior of the eigenvalue distribution function
$N_\varepsilon(\lambda)$ of the Laplace operator on a compact Riemannian manifold $M$ equipped with a foliation $\mathcal F$ in adiabatic limit. He computed the leading term of asymptotics of $N_\varepsilon(\lambda)$ as $\varepsilon\to 0$ in the case when the foliation $\mathcal F$ is Riemannian and the metric $g$ is bundle-like. A linear foliation on a torus is a Riemannian foliation, and a Euclidean metric on the torus is bundle-like. Therefore, one can apply the results of  \cite{adiab,asymp} and obtain the leading term of asymptotics of  $N_\varepsilon(\lambda)$ in the case of a linear foliation on a torus. The remainder estimates in the asymptotic formula for the number of lattice points $n_\varepsilon (S)$ obtained in this paper (see Section \ref{s:as} below) allow us to prove a nontrivial remainder estimate in the asymptotic formula for $N_\varepsilon(\lambda)$ in this particular case (see Section \ref{s:adiab0} below).

\section{Main results}

In this section, we formulate the main results of the paper.
We use notation of Section~\ref{s:setting}

\subsection{Asymptotic formula and remainder estimates}\label{s:as}
First of all, we describe the leading term of asymptotics of 
$n_\varepsilon (S)$ as $\varepsilon\to 0$ under very general assumptions on $S$.

Put $\Gamma_F = \Gamma^*\cap F$, where $\Gamma^*\subset E$ denotes the dual lattice of $\Gamma$:
\[
\Gamma^*=\{ \gamma^*\in E :(\gamma^*,\Gamma)\subset\mathbb Z\}.
\]
It is clear that $\Gamma_F$ is a free abelian group. Let
$V$ be the linear subspace of $E$ spanned by the elements of $\Gamma_F$, $r=\dim V$. Note that $\Gamma_F$ is a lattice in 
$V$. Let $\Gamma^*_F\subset V$ be the dual lattice of $\Gamma_F$.

Let $V^\bot$ be the orthogonal complement of $V$. One can show that 
\[
\Gamma^\bot=\Gamma\bigcap V^\bot
\]
is a lattice in $V^\bot$. For any $x\in V$, denote by 
$P_{x}$ the $(n-r)$-dimensional affine subspace in $E$ passing through $x$ orthogonal to $V$:
\[
P_x=x+V^\bot.
\]

\begin{thm}\label{t:lattice_points-basic} For any bounded subset 
$S$ of $E$ such that, for any $\gamma^*\in\Gamma^*_F$, the intersection $P_{\gamma^*} \cap S$ is an open, Jordan measurable subset in $P_{\gamma^*}$, the following asymptotic formula holds:
\begin{equation}\label{f:lattice_points-basic}
 n_\varepsilon (S) \sim
\frac{\varepsilon^{-q}}{{\rm vol}(V^\bot/\Gamma^\bot)}
 \sum_{\gamma^*\in\Gamma_F^*} {\rm vol}_{n-r} (P_{\gamma^*}\cap S), \quad \varepsilon \to 0.
\end{equation}
\end{thm}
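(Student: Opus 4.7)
The plan is to fiber $\Gamma$ over $\Gamma_F^*$ via the orthogonal projection $\pi_V\colon E\to V$, reduce the count to a slice-by-slice lattice point count in $V^\bot$ for the sublattice $\Gamma^\bot$, and invoke a base case in which the transverse subspace $W:=F\cap V^\bot$ is totally irrational for $\Gamma^\bot$. First I would verify that $\pi_V(\Gamma)=\Gamma_F^*$: by the standard duality $(\pi_V\Gamma)^*=\Gamma^*\cap V$ (valid because $\Gamma^\bot=\Gamma\cap V^\bot$ is a full lattice) together with $\Gamma^*\cap V=\Gamma^*\cap F=\Gamma_F$ (since $V\subset F$ and $\Gamma_F\subset V$). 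This produces the short exact sequence $0\to\Gamma^\bot\to\Gamma\to\Gamma_F^*\to 0$ and the covolume identity $\mathrm{vol}(E/\Gamma)=\mathrm{vol}(V/\Gamma_F^*)\cdot\mathrm{vol}(V^\bot/\Gamma^\bot)$.

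Choosing a set-theoretic section $\gamma^*\mapsto\tilde\gamma(\gamma^*)=\gamma^*+\xi(\gamma^*)$ with $\xi(\gamma^*)\in V^\bot$, and using that $T_\varepsilon$ preserves the decomposition $E=V\oplus V^\bot$ and restricts to the identity on $V\subset F$, the counting function splits into a finite sum
\[
n_\varepsilon(S)=\sum_{\gamma^*\in\Gamma_F^*}\#\Bigl(\Gamma^\bot\cap\bigl(T_\varepsilon(S_{\gamma^*})-\xi(\gamma^*)\bigr)\Bigr),
\]
where $S_{\gamma^*}:=\{v\in V^\bot:\gamma^*+v\in S\}$ is the translate of $P_{\gamma^*}\cap S$ into $V^\bot$. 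The theorem then reduces to the ``inner'' asymptotic
\[
\#\bigl(\Gamma^\bot\cap(T_\varepsilon B-\eta)\bigr)\sim\frac{\varepsilon^{-q}\,\mathrm{vol}_{n-r}(B)}{\mathrm{vol}(V^\bot/\Gamma^\bot)}\qquad(\varepsilon\to 0)
\]
for every bounded Jordan measurable $B\subset V^\bot$ and every $\eta\in V^\bot$; summing this over $\gamma^*\in\Gamma_F^*$ and using the covolume identity yields \eqref{f:lattice_points-basic}.

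The decisive input for the inner asymptotic is the structural identity $(\Gamma^\bot)^*\cap W=\{0\}$, which I would prove by an extension argument. Given $w\in W\cap(\Gamma^\bot)^*$, the map $\gamma^*\mapsto -(w,\xi(\gamma^*))\pmod{\mathbb Z}$ is a well-defined character of $\Gamma_F^*$ (a different choice of lift $\xi(\gamma^*)$ changes it by $(w,\Gamma^\bot)\subset\mathbb Z$), so by Pontryagin duality it equals $(v_V,\cdot)$ for some $v_V\in V$; a direct check then shows $w+v_V\in\Gamma^*\cap F=\Gamma_F\subset V$, forcing $w\in V\cap W=\{0\}$. With this irrationality in hand, Poisson summation on $\Gamma^\bot$ applied to a smoothing $\chi_B*\phi_\delta$ reduces the count to its zero Fourier mode: every nonzero $\gamma^{**}\in(\Gamma^\bot)^*$ has nontrivial $H$-component, so $|T_\varepsilon\gamma^{**}|\to\infty$ pointwise, and dominated convergence (using the Schwartz decay of $\widehat{\chi_B*\phi_\delta}$) forces the nonzero modes to contribute $o(\varepsilon^{-q})$. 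Removing the smoothing $\delta\to 0$ is controlled by Jordan measurability of $B$. The main obstacle is precisely this last step: verifying the irrationality $(\Gamma^\bot)^*\cap W=\{0\}$, which is the geometry-of-numbers heart of the argument, and then making the Poisson tail estimate uniform in $\varepsilon$ together with the boundary estimate uniform in $\delta$ in order to pass from the smooth count back to $n_\varepsilon(S)$.
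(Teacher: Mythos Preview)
Your proposal is correct and follows essentially the same route as the paper: the fibering of $\Gamma$ over $\Gamma_F^*$ is the content of Lemma~\ref{l:lemX} and formula~\eqref{e:ne-sum}, the irrationality $(\Gamma^\bot)^*\cap(F\cap V^\bot)=\{0\}$ is exactly Lemma~\ref{l:intersection} (proved there by the same extension-of-functionals idea that you phrase via Pontryagin duality), and the Poisson-plus-smoothing argument is carried out in Section~\ref{s:t123}. The only cosmetic difference is that the paper uses an anisotropic mollifier $\rho_{t_F,t_H}$ rather than your isotropic $\phi_\delta$, but that refinement is needed only for the remainder estimates of Theorems~\ref{mainthm1} and~\ref{t:lattice_points}, not for the leading term in Theorem~\ref{t:lattice_points-basic}.
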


We introduce the remainder $R_\varepsilon (S)$ in the formula \eqref{f:lattice_points-basic} by the formula
\[
R_\varepsilon (S)=n_\varepsilon (S) - \frac{\varepsilon^{-q}}{{\rm
vol}(V^\bot/\Gamma^\bot)}
 \sum_{\gamma^*\in\Gamma_F^*} {\rm vol}_{n-r} (P_{\gamma^*}\cap S).
\]
It is well-known that, in lattice point counting problems, remainder estimates depends heavily on properties of the boundary of the domain.

\begin{thm}\label{mainthm1}
Let $S$ be a bounded set in $E$ such that, for any 
$\gamma^*\in\Gamma^*_F$, the intersection $P_{\gamma^*} \cap S$ is a bounded open subset in $P_{\gamma^*}$ with smooth boundary. Then we have
\[
 R_\varepsilon (S)= O(\varepsilon^{\frac{1}{p-r+1}-q}), \quad \varepsilon \to 0.
\]
\end{thm}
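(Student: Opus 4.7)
The strategy is to slice $n_\varepsilon(S)$ along the orthogonal decomposition $E=V\oplus V^\bot$, thereby reducing the count to a family of lattice-counting problems for $\Gamma^\bot$ in the lower-dimensional ambient space $V^\bot$, for sets dilated by $\varepsilon^{-1}$ only in the $H$-directions. Set $W=V^\bot\cap F$, so that $V^\bot=W\oplus H$ with $\dim W=p-r$; the operator $T_\varepsilon$ then fixes $V$ pointwise and restricts on $V^\bot$ to an operator $T_\varepsilon^{V^\bot}$ that is the identity on $W$ and multiplication by $\varepsilon^{-1}$ on $H$.

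First I would verify, by the standard projection--intersection duality for dual lattices, that $\pi_V(\Gamma)=\Gamma_F^*$, where $\pi_V$ is orthogonal projection onto $V$. Fix a section $s\colon\Gamma_F^*\to\Gamma$ of $\pi_V|_\Gamma$ and write $s(\gamma^*)=\gamma^*+t(\gamma^*)$ with $t(\gamma^*)\in V^\bot$; then every $\gamma\in\Gamma$ is uniquely of the form $\gamma^*+t(\gamma^*)+\eta$ with $\gamma^*\in\Gamma_F^*$ and $\eta\in\Gamma^\bot$. Since $T_\varepsilon$ preserves the decomposition $V\oplus V^\bot$, the condition $\gamma\in T_\varepsilon(S)$ is equivalent to
\[
t(\gamma^*)+\eta\in T_\varepsilon^{V^\bot}(S^{\gamma^*}),\qquad S^{\gamma^*}:=(P_{\gamma^*}\cap S)-\gamma^*\subset V^\bot,
\]
where $S^{\gamma^*}$ is bounded open with smooth boundary by hypothesis. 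Noting that only $\gamma^*\in\Gamma_F^*\cap\pi_V(S)$ (a finite set, since $\pi_V(S)$ is bounded and $\Gamma_F^*$ is discrete) contributes, this gives
\[
n_\varepsilon(S)=\sum_{\gamma^*\in\Gamma_F^*\cap\pi_V(S)}\#\bigl((\Gamma^\bot+t(\gamma^*))\cap T_\varepsilon^{V^\bot}(S^{\gamma^*})\bigr).
\]

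The next step is to verify that each sub-problem in $V^\bot$ is in \emph{generic position} with respect to $W$, i.e., $(\Gamma^\bot)^*\cap W=\{0\}$. Via the identification $(\Gamma^\bot)^*=\pi_{V^\bot}(\Gamma^*)$, if $\pi_{V^\bot}(\gamma^*)\in W\subset F$ for some $\gamma^*=\gamma_F^*+\gamma_H^*\in\Gamma^*$, the explicit formula $\pi_{V^\bot}(\gamma^*)=\pi_W(\gamma_F^*)+\gamma_H^*$ forces $\gamma_H^*=0$, whence $\gamma^*\in\Gamma^*\cap F=\Gamma_F\subset V$ and $\pi_{V^\bot}(\gamma^*)=0$.

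The final and main step is the anisotropic lattice-point estimate: for any full-rank lattice $\Lambda\subset V^\bot=W\oplus H$ with $\Lambda^*\cap W=\{0\}$, any $x_0\in V^\bot$, and any bounded open $D\subset V^\bot$ with smooth boundary,
\[
\#\bigl((\Lambda+x_0)\cap T_\varepsilon^{V^\bot}(D)\bigr)=\frac{\varepsilon^{-q}\,\mathrm{vol}_{n-r}(D)}{\mathrm{vol}(V^\bot/\Lambda)}+O\bigl(\varepsilon^{1/(p-r+1)-q}\bigr).
\]
Applied with $\Lambda=\Gamma^\bot$, $x_0=t(\gamma^*)$, $D=S^{\gamma^*}$ and summed over the finite index set, this recovers \eqref{f:lattice_points-basic} with the advertised remainder. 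This estimate is the hard part: the gain of $\varepsilon^{1/(p-r+1)}$ over the trivial $O(\varepsilon^{-q})$ bound is exactly what the generic-position hypothesis on the $(p-r)$-dimensional subspace $W$ should buy. I would obtain it by mollifying $\chi_D$ at a scale $\delta$ to be optimized, applying Poisson summation to $\Lambda$, and splitting the resulting dual sum according to whether the $W$-component of the frequency is of order $\lesssim 1$ or larger. The absence of small dual lattice vectors in $W$ kills the dangerous low-frequency contribution, while the smoothness of $\partial D$ supplies sufficient Fourier decay of $\widehat{\chi_D}$ in the complementary directions; optimization in $\delta$ yields the claimed exponent.
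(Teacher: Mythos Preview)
Your plan coincides with the paper's: the slicing reduction along $V\oplus V^\bot$ is exactly Section~3.2 (formula~\eqref{e:ne-sum}), your generic-position claim $(\Gamma^\bot)^*\cap(F\cap V^\bot)=\{0\}$ is Lemma~\ref{l:intersection} (your proof via $(\Gamma^\bot)^*=\pi_{V^\bot}(\Gamma^*)$ is a clean variant of the paper's argument through Lemma~\ref{l:lemX}), and the final step is the mollified Poisson-summation estimate of Section~3.3, whose details the paper likewise defers to \cite{lattice-points,varchenko}. One small correction to your sketch of that last step: the relevant split of the dual sum is $k_H=0$ versus $k_H\neq 0$ (the key lemma then forces $k=0$ in the first piece), not according to the size of the $W$-component, and the paper mollifies anisotropically at two independent scales $t_F,t_H$ in the $W$- and $H$-directions rather than a single $\delta$; this is what makes the optimization yield the exponent $1/(p-r+1)$.
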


\begin{thm}\label{t:lattice_points}
Let $S$ be a bounded set in $E$ such that, for any 
$\gamma^*\in\Gamma^*_F$, the intersection $P_{\gamma^*} \cap S$ is a bounded open subset in $P_{\gamma^*}$ with smooth boundary.

(1) If, for any $\gamma^*\in\Gamma^*_F$ and $x\in F\cap V^\bot$, the intersection $S\cap \{\gamma^*+x+H\}$ is strictly convex, then 
\[
R_\varepsilon (S)=O(\varepsilon^{\frac{2q}{q+1+2(p-r)}-q}), \quad
\varepsilon \to 0.
\]

(2) If, for any $\gamma^*\in\Gamma^*_F$, the intersection 
$P_{\gamma^*} \cap S$ is strictly convex, then
\[
R_\varepsilon (S)= O(\varepsilon^{\frac{2q}{n-r+1}-q}), \quad
\varepsilon \to 0.
\]
\end{thm}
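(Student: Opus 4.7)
The plan is to reduce both statements to sharp per-slice lattice-point discrepancy bounds in $V^\bot$ using the same slicing of $\Gamma$ that underlies Theorem~\ref{t:lattice_points-basic}. Since $V$ is spanned by $\Gamma^*\cap F$, standard duality yields that $\Gamma^\bot:=\Gamma\cap V^\bot$ is a full lattice in $V^\bot$ and that the orthogonal projection $\pi_V$ sends $\Gamma$ onto $\Gamma_F^*$. Writing $W:=F\cap V^\bot$, so that $V^\bot=W\oplus H$ with $\dim W=p-r$, the map $T_\varepsilon$ fixes $V\subset F$ and acts on $V^\bot$ by the anisotropic scaling $\tilde T_\varepsilon$ that fixes $W$ and scales $H$ by $\varepsilon^{-1}$. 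Organizing $\Gamma$ as a union of cosets of $\Gamma^\bot$ indexed by $\Gamma_F^*$, one obtains
\[
n_\varepsilon(S)=\sum_{\gamma^*\in\Gamma_F^*}\#\bigl(\tilde T_\varepsilon(S_{\gamma^*})\cap(\Gamma^\bot+c(\gamma^*))\bigr),\qquad S_{\gamma^*}:=(S\cap P_{\gamma^*})-\gamma^*,
\]
with $c(\gamma^*)\in V^\bot$ any lift. Since $\pi_V(S)$ is bounded independently of $\varepsilon$, only $O(1)$ slices contribute, so it suffices to bound, for each such $\gamma^*$, the per-slice discrepancy
\[
D_\varepsilon(\gamma^*):=\#\bigl(\tilde T_\varepsilon(S_{\gamma^*})\cap(\Gamma^\bot+c(\gamma^*))\bigr)-\frac{\varepsilon^{-q}\,\mathrm{vol}_{n-r}(S_{\gamma^*})}{\mathrm{vol}(V^\bot/\Gamma^\bot)}.
\]
Poisson summation identifies $D_\varepsilon(\gamma^*)$ with $\varepsilon^{-q}$ times a sum of $\widehat{\chi_{S_{\gamma^*}}}(\tilde T_\varepsilon\xi)$, twisted by unit-modulus phases, over the nonzero $\xi\in(\Gamma^\bot)^*$.

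For part~(2), $S_{\gamma^*}$ is strictly convex in $V^\bot\cong\mathbb R^{n-r}$, so Herz's theorem gives $|\widehat{\chi_{S_{\gamma^*}}}(\zeta)|=O(|\zeta|^{-(n-r+1)/2})$. Combined with the standard smoothing-and-optimization argument of Hlawka, adapted so that the dilation factor $\varepsilon^{-1}$ appears only in the $q$-dimensional $H$-directions, this yields $D_\varepsilon(\gamma^*)=O(\varepsilon^{-q(n-r-1)/(n-r+1)})=O(\varepsilon^{2q/(n-r+1)-q})$, which is the claim.

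For part~(1), only the $H$-slices $S_{\gamma^*,w}:=\{h\in H:(w,h)\in S_{\gamma^*}\}$ are strictly convex, so Herz applies only in the $H$-variable. I would factor $\chi_{S_{\gamma^*}}(w,h)=\chi_A(w)\chi_{S_{\gamma^*,w}}(h)$, with $A:=\pi_W(S_{\gamma^*})$, replace $\chi_A$ by a mollification $\tilde\chi_A$ at scale $\delta$ in $W$, and use the mixed Fourier bound
\[
|\widehat{\tilde\chi_{S_{\gamma^*}}}(\xi_W,\xi_H)|\le C\,|\widehat{\tilde\chi_A}(\xi_W)|\,|\xi_H|^{-(q+1)/2}.
\]
After anisotropic rescaling by $\tilde T_\varepsilon$, Poisson summation over $(\Gamma^\bot)^*$, and balancing the $O(\delta)$ mollification error against the dual-lattice contribution (which grows as a power of $\delta^{-1}$ weighted by the $(p-r)$ transverse directions), the optimal $\delta$ produces the denominator $q+1+2(p-r)$ and gives $D_\varepsilon(\gamma^*)=O(\varepsilon^{2q/(q+1+2(p-r))-q})$.

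The main obstacle is part~(1): the partial-convexity hypothesis provides Fourier decay only in the $H$-directions, and extracting the sharp exponent $2q/(q+1+2(p-r))$ requires carefully tuning the $W$-mollification scale against the Herz decay in $H$ and the anisotropy of the lattice sum over $(\Gamma^\bot)^*$. Once this per-slice estimate is secured, the summation over the $O(1)$ contributing $\gamma^*$ is immediate and finishes both parts of the theorem.
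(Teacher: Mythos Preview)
Your approach is essentially that of the paper: the same coset decomposition of $\Gamma$ over $\Gamma_F^*$ reduces to per-slice counting in $V^\bot=W\oplus H$ (the paper's Lemma~\ref{l:intersection} guarantees $(\Gamma^\bot)^*\cap W=\{0\}$, which is exactly what keeps your Poisson sums non-degenerate), followed by anisotropic two-scale smoothing, Poisson summation, Herz-type Fourier decay, and optimization of the mollification scales --- this is precisely the method of \cite{lattice-points,varchenko} that the paper invokes. One technical correction in part~(1): the displayed bound $|\widehat{\tilde\chi_{S_{\gamma^*}}}(\xi_W,\xi_H)|\le C\,|\widehat{\tilde\chi_A}(\xi_W)|\,|\xi_H|^{-(q+1)/2}$ cannot hold as written, since $S_{\gamma^*,w}$ varies with $w$ and the Fourier transform does not factor; what one actually proves (by integrating by parts in $w$, using that the Herz asymptotics for $\widehat{\chi_{S_{\gamma^*,w}}}$ depend smoothly on $w$ thanks to the smooth-boundary hypothesis) is an estimate of the shape $C_N(1+\delta|\xi_W|)^{-N}|\xi_H|^{-(q+1)/2}$, together with a second mollification scale in $H$ to make the lattice sum converge --- with that adjustment your optimization reproduces the stated exponents.
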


In the particular case when $E$ is the space ${\mathbb R}^n$
equipped with the standard Euclidean structure and $\Gamma={\mathbb
Z}^n$, the results of this section was obtained in 
\cite{lattice-points,mz-varchenko,varchenko}.

\subsection{Applications to adiabatic limits}\label{s:adiab0}
Applying the results of Section~\ref{s:as} in the situation described in Section~\ref{s:adiab1}, we obtain the asymptotic formula for the eigenvalue distribution function $N_\varepsilon(\lambda)$ of the operator  $\Delta_\varepsilon$ on the flat torus $\mathbb T=\mathcal
E/\Lambda$ in adiabatic limit. We will use notation introduced in Section~\ref{s:adiab1}.

Let $V$ be the linear subspace of $\mathcal E$ spanned by the elements of the free abelian group $\Lambda\cap F$, $r=\dim V$. Then $\Lambda\cap F$ is a lattice in $V$. Let $(\Lambda\cap F)^*$ denote the lattice in $V^*$, dual of $\Lambda\cap F$.

\begin{thm}\label{mainthm2}
For any $\lambda>0$,  the following formula holds as $\varepsilon \to 0$:
\begin{multline}\label{e:Ne}
N_\varepsilon (\lambda) \\ = \varepsilon^{-q} \frac{\omega_{n-r}{\rm
vol}(\mathcal E/\Lambda)}{{\rm vol}(V/\Lambda\cap F)}
 \sum_{\substack{k\in (\Lambda\cap F)^*,\\ |k|_{g^{-1}} < \frac{1}{2\pi}\sqrt{\lambda}}} \left(\frac{\lambda}{4\pi^2}-|k|_{g^{-1}}^2\right)^{(n-r)/2}
+O(\varepsilon^{\frac{2q}{n-r+1}-q}),
\end{multline}
where $\omega_{n-r}$ is the volume of the unit ball in ${\mathbb R}^{n-r}$.
\end{thm}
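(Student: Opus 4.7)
The plan is to reduce Theorem \ref{mainthm2} to the lattice point counting results of Section \ref{s:as}, applied with $E=\mathcal{E}^*$ (equipped with $g^{-1}$), $\Gamma=\Lambda^*$, and $S=B_{\sqrt{\lambda}/(2\pi)}(0)$, and then to invoke Theorem \ref{t:lattice_points}(2). By the spectral description of Section \ref{s:adiab1} one has $N_\varepsilon(\lambda)=n_\varepsilon(B_{\sqrt{\lambda}/(2\pi)}(0))$, so any asymptotic for $n_\varepsilon(S)$ translates immediately into one for $N_\varepsilon(\lambda)$.

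Next I would identify, via the musical isomorphism $\mathcal{E}^*\cong\mathcal{E}$ induced by $g$, the objects appearing in Section \ref{s:as}: the dual of $\Gamma=\Lambda^*$ is identified with $\Lambda\subset\mathcal{E}$, so $\Gamma_F=\Gamma^*\cap F$ becomes $\Lambda\cap F$, the subspace $V$ becomes its linear span (the same $V$ as in the statement of Theorem \ref{mainthm2}), and $\Gamma_F^*$ becomes $(\Lambda\cap F)^*$. For each $k\in(\Lambda\cap F)^*$ the slice $P_k\cap S=(k+V^{\bot})\cap B_{\sqrt{\lambda}/(2\pi)}(0)$ is a Euclidean ball in the $(n-r)$-dimensional affine subspace $k+V^{\bot}$ of radius $\sqrt{\lambda/(4\pi^2)-|k|^2_{g^{-1}}}$; it is therefore strictly convex whenever nonempty, and its $(n-r)$-dimensional volume equals $\omega_{n-r}(\lambda/(4\pi^2)-|k|^2_{g^{-1}})^{(n-r)/2}$. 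This verifies the hypothesis of Theorem \ref{t:lattice_points}(2) and produces both the sum appearing in Theorem \ref{mainthm2} and the remainder $O(\varepsilon^{2q/(n-r+1)-q})$.

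What remains is to match normalizations: Theorem \ref{t:lattice_points-basic} yields the prefactor $1/{\rm vol}(V^{\bot}/\Gamma^{\bot})$, while the target formula uses $\omega_{n-r}\,{\rm vol}(\mathcal{E}/\Lambda)/{\rm vol}(V/\Lambda\cap F)$. For this I would invoke the standard lattice identities: letting $\pi\colon\mathcal{E}\to V^{\bot}$ denote the orthogonal projection, one checks directly from the definitions that $\pi(\Lambda)$ and $\Lambda^*\cap V^{\bot}=\Gamma^{\bot}$ are mutually dual lattices in $V^{\bot}$, so that ${\rm vol}(V^{\bot}/\Gamma^{\bot})\cdot{\rm vol}(V^{\bot}/\pi(\Lambda))=1$; combined with the fibered volume identity ${\rm vol}(\mathcal{E}/\Lambda)={\rm vol}(V/\Lambda\cap V)\cdot{\rm vol}(V^{\bot}/\pi(\Lambda))$ and with the equality $\Lambda\cap V=\Lambda\cap F$ (which holds because $V$ is spanned by $\Lambda\cap F$), this supplies the required prefactor. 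The only genuine difficulty in the argument is the bookkeeping of dual lattices and metric identifications—in particular one must keep track of the two meanings of $V^{\bot}$, once inside $\mathcal{E}^*$ relative to $g^{-1}$ and once inside $\mathcal{E}$ relative to $g$—after which Theorem \ref{mainthm2} becomes a direct corollary of Theorem \ref{t:lattice_points}(2).
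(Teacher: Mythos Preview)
Your proposal is correct and follows essentially the same strategy as the paper: reduce to Theorem~\ref{t:lattice_points}(2) with $E=\mathcal E^*$, $\Gamma=\Lambda^*$, and $S$ the ball of radius $\sqrt{\lambda}/(2\pi)$, then match the volume prefactor. The paper derives the identity ${\rm vol}(V^\bot/\Gamma^\bot)={\rm vol}(V/\Lambda\cap F)/{\rm vol}(\mathcal E/\Lambda)$ by invoking Lemma~\ref{l:ort} and \eqref{e:vv} directly, whereas you reach the same conclusion via the duality of $\pi(\Lambda)$ and $\Lambda^*\cap V^\bot$ together with the fibered volume identity---but this is precisely the content of the proof of Lemma~\ref{l:ort}, so the two arguments coincide.
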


The expression in the right-hand side of \eqref{e:Ne} was recently studied by J. Lagac\'e and L. Parnovski in \cite{LagaceParnovski}. More precisely, the authors consider the function 
\[
S(\rho,\mathbf k; d,k)=\omega_\ell\sum_{\substack{\gamma\in {\mathbb Z}^k,\\ |\gamma-\mathbf k|<\rho}} (\rho^2-|\gamma-\mathbf k|^2)^{\ell/2}, \quad \rho \in \mathbb R,\quad \mathbf k\in \mathbb R^k
\]
with some natural $k, d$ with $k+\ell=d$, $\ell>0$. This function is treated as the partial density of states of the Laplace operator in $\mathbb R^d$ considered as a $(2\pi\mathbb Z)^d$-periodic operator. For ${\mathbf k}=0$, it can be also interpreted as the integrated density of states of the Laplace operator in $\mathbb T^\ell\times \mathbb R^k$. It is clear that, as $\rho\to \infty$, $S(\rho,\mathbf k; d,k)$ is asymptotically the volume of the ball in $\mathbb  R^d$ of radius $\rho$, $S(\rho,\mathbf k; d,k)\sim \omega_d\rho^d$. In \cite{LagaceParnovski}, upper and lower bounds for the remainder in this asymptotic formula are stated.  These results are then used to compute the integrated density of states of the magnetic Schr\"odinger operator with constant magnetic field.

\subsection{Anomalously small remainder estimates}

In this section, we describe the results on anomalously small remainder estimates in the formula \eqref{f:lattice_points-basic}. These results improve the results of 
\cite{Skriganov89,Skriganov94,Nikichine-Skriganov95,Nikichine-Skriganov98} in the situation under consideration.

Let $\mathbf e=(e_1,\ldots,e_d)$ be an orthonormal basis in a 
$d$-dimensional Euclidean space $\mathcal E$. Put, for $X\in
\mathcal E$,
\[
\operatorname{Nm}_{\mathbf e} X=\prod_{j=1}^d(X,e_j).
\]
Let $K$ be a rectangular parallelepiped in $\mathcal E$ and let 
$\mathbf e=(e_{1},\ldots,e_d)$ be an orthonormal basis in 
$\mathcal E$ such that the edges of $K$ are parallel to the vectors 
$e_{1},\ldots,e_d$. We say that a lattice $L$ in $\mathcal E$ in good position with respect to $K$, if
\[
\inf_{X\in L\setminus\{0\}} \left|\operatorname{Nm}_{\mathbf e}
X\right|>0.
\]

\begin{thm}\label{t:anomaly1}
Let $S$ be a bounded open set in $E$ such that, for any  $\gamma^*\in\Gamma^*_F$, the intersection $P_{\gamma^*} \cap S$ is a rectangular parallelepiped in $P_{\gamma^*}$ with edges parallel either to $F\cap V^\bot$ or to $H$. Suppose that a lattice $\Gamma^\bot$ â $V^\bot$ in good position with respect to
$P_{\gamma^*} \cap S$. Then we have
\[
 R_\varepsilon (S)= O\left(|\log \varepsilon|^{n-r-1}\right), \quad \varepsilon \to 0.
\]
\end{thm}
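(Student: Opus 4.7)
The plan is to slice the lattice $\Gamma$ along the orthogonal decomposition $E=V\oplus V^\bot$ and reduce the problem to Skriganov's theorem on lattice points in rectangular parallelepipeds. Since $V\subset F$, the operator $T_\varepsilon$ acts as the identity on $V$ and preserves $V^\bot$ (identity on $F\cap V^\bot$, multiplication by $\varepsilon^{-1}$ on $H$); in particular $\pi_V\circ T_\varepsilon=\pi_V$, where $\pi_V\colon E\to V$ denotes the orthogonal projection. A standard duality argument gives $\pi_V(\Gamma)=\Gamma_F^*$ and $\ker(\pi_V|_\Gamma)=\Gamma^\bot$. Fix a set-theoretic section $\gamma^*\mapsto \ell(\gamma^*)\in\Gamma$ of $\pi_V|_\Gamma$. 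Since $S$ is bounded and $\pi_V(T_\varepsilon(S))=\pi_V(S)$, the index set $\Gamma_F^*\cap \pi_V(S)$ is finite and independent of $\varepsilon$, and one obtains
\[
n_\varepsilon(S)=\sum_{\gamma^*\in\Gamma_F^*\cap\pi_V(S)}\#\bigl((\ell(\gamma^*)+\Gamma^\bot)\cap T_\varepsilon(S\cap P_{\gamma^*})\bigr).
\]

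For each fixed $\gamma^*$ in this finite set, translation by $-\gamma^*$ identifies $P_{\gamma^*}$ with $V^\bot$. By hypothesis, $(S\cap P_{\gamma^*})-\gamma^*$ is a rectangular parallelepiped in $V^\bot$ whose edges are parallel to an orthonormal basis $\mathbf{e}$ of $V^\bot=(F\cap V^\bot)\oplus H$ built by concatenating orthonormal bases of the two summands. The map $T_\varepsilon|_{V^\bot}$ is diagonal in $\mathbf{e}$, with eigenvalue $1$ on $F\cap V^\bot$ and eigenvalue $\varepsilon^{-1}$ on $H$, so the problem reduces to counting points of the translated lattice $(\ell(\gamma^*)-\gamma^*)+\Gamma^\bot$ in a rectangular parallelepiped $\Pi_{\gamma^*,\varepsilon}\subset V^\bot$ of dimension $n-r$, with edges parallel to $\mathbf{e}$ and edge lengths $O(\varepsilon^{-1})$. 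Crucially, good position is a property of the pair $(\Gamma^\bot,\mathbf{e})$ alone and is therefore preserved under the diagonal stretching $T_\varepsilon$. Skriganov's theorem in its inhomogeneous form for translates of good-position lattices (as developed in \cite{Skriganov89,Skriganov94,Nikichine-Skriganov95,Nikichine-Skriganov98}) then yields
\[
\#\bigl((\ell(\gamma^*)+\Gamma^\bot)\cap T_\varepsilon(S\cap P_{\gamma^*})\bigr)=\frac{{\rm vol}_{n-r}(\Pi_{\gamma^*,\varepsilon})}{{\rm vol}(V^\bot/\Gamma^\bot)}+O\bigl(|\log\varepsilon|^{n-r-1}\bigr).
\]
Since ${\rm vol}_{n-r}(\Pi_{\gamma^*,\varepsilon})=\varepsilon^{-q}{\rm vol}_{n-r}(P_{\gamma^*}\cap S)$, summing over the finite index set reproduces the main term of Theorem~\ref{t:lattice_points-basic} with the claimed logarithmic remainder.

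The main obstacle is ensuring that Skriganov's bound applies uniformly in $\varepsilon$ and in the finitely many translations $\ell(\gamma^*)-\gamma^*$. Uniformity in $\varepsilon$ rests on the observation that the $(F\cap V^\bot)$-edges of $\Pi_{\gamma^*,\varepsilon}$ are unchanged by $T_\varepsilon$ and the $H$-edges are of length $O(\varepsilon^{-1})$, so the good-position constant feeds directly into the classical estimate; uniformity in the shift is then automatic because the sum over $\gamma^*$ is finite and the implied constant in the inhomogeneous Skriganov estimate depends only on the lattice and on $\mathbf{e}$. The conceptual heart of the argument is the slicing by $\pi_V$, which converts the anisotropic $n$-dimensional lattice-point problem into a finite family of isotropic Skriganov problems of dimension $n-r$, thereby explaining the exponent $n-r-1$.
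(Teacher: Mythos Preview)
Your proposal is correct and follows essentially the same route as the paper: the slicing of $\Gamma$ along $E=V\oplus V^\bot$ via $\pi_V$ is exactly the reduction carried out in Section~\ref{s:trivial} (your section $\ell(\gamma^*)$ is the paper's $k_{\gamma^*}$, and your displayed sum is formula~\eqref{e:ne-sum}), and the endgame---writing each $T_\varepsilon$-stretched box as $T\cdot\mathbb{K}^{n-r}+X$ with $|\operatorname{Nm}T|\asymp\varepsilon^{-q}$ and invoking the inhomogeneous form of Skriganov's Theorem~1.1 from \cite{Skriganov94}---matches the paper's argument in Section~\ref{s:t56} verbatim. The only cosmetic difference is that the paper first isolates the reduction step as a standalone lemma (Lemma~\ref{l:intersection}) before specializing to parallelepipeds, whereas you carry the slicing and the Skriganov application in a single pass.
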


Theorem \ref{t:anomaly1} is an essential improvement of Theorem 
1.1 from \cite{Skriganov94} for the particular case of anisotropically extending domains under consideration (see Introduction of 
\cite{lattice-points} for weaker results, obtained by a straightforward application of Theorem 1.1 from \cite{Skriganov94}).

It was noted in \cite[Section 7]{Nikichine-Skriganov98} and
\cite{Nikichine-Skriganov95} that the effect of abnormal diminishing of remainder estimates is related with the fact that the domains under consideration are represented as a cartesian product of domains of smaller dimension. We study this effect in the case in question and obtain analogues of the results of 
\cite{Nikichine-Skriganov98,Nikichine-Skriganov95}.

First of all, we recall the definition of an algebraic lattice. Let $K$ be an algebraic number field of degree $d=[K:\mathbb Q]$, having $s$
real embeddings and $t$ pairs of complex-conjugate embeddings. Thus, $d=s+2t$. Let $\{\sigma^\prime_1,\ldots,
\sigma^\prime_s\}$ be a complete system of real embeddings of $K$
in $\mathbb R$ and $\{\sigma^{\prime\prime}_1, \ldots,
\sigma^{\prime\prime}_t\}$ be a complete system of complex embeddings of $K$ in $\mathbb C$. Then the canonical embedding $\sigma$ of
$K$ in ${\mathbb R}^d$ is defined by
\[
\sigma : K\ni \xi \mapsto
\sigma(\xi)=(\sigma^\prime_1(\xi),\ldots,\sigma^\prime_s(\xi),
\sigma^{\prime\prime}_1(\xi),\ldots,\sigma^{\prime\prime}_t(\xi))\in
{\mathbb R}^s\times {\mathbb C}^t\cong {\mathbb R}^d.
\]
If $M\subset K$ is a $\mathbb Z$-module of rank $d$, then the image of  $M$ under $\sigma$,
\[
\Gamma_M=\sigma(M)\subset {\mathbb R}^d,
\]
is a lattice in ${\mathbb R}^d$. Such a lattice is called algebraic. A lattice $\Gamma$ in a $d$-dimensional Euclidean space $V$ is said to be algebraic, if there exists an orthonormal basis $(e_1,\ldots,e_d)$ in $V$ such that the image of $\Gamma$ under the isomorphism $V\cong {\mathbb R}^d$ defined by the basis is an algebraic lattice in ${\mathbb
R}^d$.

Suppose that the subspaces $F\cap V^\bot$ and $H$ are represented as orthogonal direct sums
\begin{equation}\label{e:decomp}
F\cap V^\bot=\bigoplus_{j=1}^{\ell_F} E_j, \quad
H=\bigoplus_{j=\ell_F+1}^{\ell} E_j, \quad \ell_F+\ell_H=\ell,
\end{equation}
where $E_j$ is an Euclidean subspace of $E$ of dimension $m_j$,
$j=1,\ldots,\ell$.

Suppose that the lattice $\Gamma^\bot \subset V^\bot$ is an algebraic lattice, corresponding to a complete $\mathbb
Z$-module $M$ in an algebraic number field $K$ and to an orthonormal basis $(e_1,\ldots,e_{n-r})$ in $V^\bot$.
We say that $\Gamma^\bot$ is in good position with respect to the decomposition \eqref{e:decomp}, if, for each embedding $\sigma_v$, the image $\sigma_v(K)$ of the field $K$ belongs entirely to some subspace $E_j, j=1,\ldots,\ell$.

\begin{thm}\label{t:anomaly2}
Suppose that there is given a decomposition \eqref{e:decomp} of 
$F\cap V^\bot$ and $H$ as orthogonal direct sums and the lattice
$\Gamma^\bot$ is in good position with respect to this decomposition. Let $S\subset E$ be a bounded set such that, for any $\gamma^*\in\Gamma^*_F$, the intersection $P_{\gamma^*}
\cap S$ is an open set in 
$P_{\gamma^*}=\gamma^*+V^{\bot}$ of the form
\[
P_{\gamma^*} \cap S=\prod_{j=1}^\ell (\gamma^*+S_{j,\gamma^*}),
\]
where, for any $j$, the set $S_{j,\gamma^*}$ is a strictly convex open subset of $E_j$.

Then, for any $\delta>0$, there exists s constant 
$C_\delta>0$ such that 
\[
|R_\varepsilon (S)| < C_\delta
\varepsilon^{-\frac{q(n-r-\ell)}{n-r-\ell+2}}|\ln
\varepsilon|^{s+t+\delta}.
\]
\end{thm}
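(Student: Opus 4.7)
My plan is to reduce, via slicing along $V$ and Poisson summation on the algebraic lattice $\Gamma^\bot\subset V^\bot$, to an oscillatory sum indexed by the dual lattice $(\Gamma^\bot)^*$, controlled by Skriganov--Nikichine-type estimates for algebraic lattices in good position with products of convex bodies. By the standard duality $\pi_V(\Gamma)=(\Gamma^*\cap V)^*=\Gamma_F^*$, fixing lifts $w_{\gamma^\#}\in V^\bot$ for each $\gamma^\#\in\Gamma_F^*$ gives a disjoint decomposition $\Gamma=\bigsqcup_{\gamma^\#}(\gamma^\#+w_{\gamma^\#}+\Gamma^\bot)$. Since $\gamma^\#\in V\subset F$ is fixed by $T_\varepsilon$, the condition $\gamma\in T_\varepsilon(S)$ reduces to $w_{\gamma^\#}+\gamma^\bot\in T_\varepsilon(A_{\gamma^\#})$, where $A_{\gamma^\#}=(P_{\gamma^\#}\cap S)-\gamma^\#=\prod_{j=1}^\ell S_{j,\gamma^\#}\subset V^\bot$. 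Hence
\begin{equation*}
n_\varepsilon(S)=\sum_{\gamma^\#\in\Gamma_F^*}\#\bigl((\Gamma^\bot+w_{\gamma^\#})\cap T_\varepsilon(A_{\gamma^\#})\bigr),
\end{equation*}
a sum with finitely many nonzero terms because $S$ is bounded.

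Next I would apply Poisson summation on $(V^\bot,\Gamma^\bot)$ to each slice. The $\eta=0$ mode recovers the main term of Theorem~\ref{t:lattice_points-basic}, leaving
\begin{equation*}
R_\varepsilon(\gamma^\#)=\frac{1}{\operatorname{vol}(V^\bot/\Gamma^\bot)}\sum_{0\ne\eta\in(\Gamma^\bot)^*}e^{2\pi i(\eta,w_{\gamma^\#})}\widehat{\chi_{T_\varepsilon(A_{\gamma^\#})}}(\eta).
\end{equation*}
Writing $\eta=\sum_j\eta_j$ with $\eta_j\in E_j$, the product form of $A_{\gamma^\#}$ and the diagonal action of $T_\varepsilon$ (identity on $E_j\subset F\cap V^\bot$, $\varepsilon^{-1}\cdot\mathrm{id}$ on $E_j\subset H$) factor the Fourier transform into single-factor pieces. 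The Fourier decay $|\widehat{\chi_B}(\xi)|\le C(1+|\xi|)^{-(m+1)/2}$ for strictly convex bodies $B\subset\mathbb R^m$ then yields
\begin{equation*}
|\widehat{\chi_{T_\varepsilon(A_{\gamma^\#})}}(\eta)|\le C\prod_{j\le\ell_F}(1+|\eta_j|)^{-(m_j+1)/2}\prod_{j>\ell_F}\varepsilon^{-m_j}(1+\varepsilon^{-1}|\eta_j|)^{-(m_j+1)/2}.
\end{equation*}

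The heart of the argument is the arithmetic summation of this bound over $(\Gamma^\bot)^*\setminus\{0\}$. Since $\Gamma^\bot$ is algebraic and in good position with respect to $\bigoplus_j E_j$, so is $(\Gamma^\bot)^*$, associated with the complementary module $M^\vee\subset K$ and the same splitting of embeddings among the $E_j$. Liouville's lower bound $|N_{K/\mathbb Q}(\xi)|\ge c_0>0$ for nonzero $\xi\in M^\vee$ translates, via good position, into a multiplicative lower bound on the Euclidean norms $|\eta_j|$ of $\eta=\sigma(\xi)$. Partitioning $(\Gamma^\bot)^*\setminus\{0\}$ into orbits under the unit group $\mathcal O_K^\times$ (of rank $s+t-1$ by Dirichlet's theorem) and replacing discrete sums by integrals with $\varepsilon$-dependent cutoffs then produces the per-slice bound $C_\delta\,\varepsilon^{-q(n-r-\ell)/(n-r-\ell+2)}|\ln\varepsilon|^{s+t+\delta}$. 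Summing over the finitely many active $\gamma^\#$ yields the asserted estimate for $R_\varepsilon(S)$.

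The main obstacle lies in this last step. One has to balance the Herz-type Fourier decay against the arithmetic lower bound in the \emph{mixed} scaling regime where only the $E_j\subset H$ are dilated by $\varepsilon^{-1}$. The exponent $-q(n-r-\ell)/(n-r-\ell+2)$ emerges from optimizing the effective cutoff radius in the dual-lattice sum, while the factor $|\ln\varepsilon|^{s+t+\delta}$ reflects the geometry of the $\mathcal O_K^\times$-fundamental domain together with a $\delta$ loss from passing between lattice counts and Euclidean volumes---precisely the technical core of the Skriganov--Nikichine framework transplanted to the anisotropic setting at hand.
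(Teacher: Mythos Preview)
Your slicing reduction is exactly the paper's first step: the decomposition $\Gamma=\bigsqcup_{\gamma^\#\in\Gamma_F^*}(\gamma^\#+w_{\gamma^\#}+\Gamma^\bot)$ is the content of Lemma~\ref{l:lemX} and formula~\eqref{e:ne-sum}, and the resulting per-slice counting problem in $V^\bot$ with lattice $\Gamma^\bot$ is what the paper works with after imposing Condition~\eqref{e:ass}.

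The difference lies in how the per-slice remainder is handled. You treat the anisotropic dilation as a genuine ``mixed scaling regime'' requiring a bespoke adaptation of the Skriganov--Nikichine machinery (Poisson summation, Herz decay, Liouville lower bound, Dirichlet units, orbit decomposition). The paper instead makes one clean observation that dissolves this obstacle entirely: because $\Gamma^\bot$ is in good position with respect to the decomposition $V^\bot=\bigoplus_j E_j$, each $E_j$ is a direct sum of coordinate subspaces coming from the embeddings $\sigma_v$, and $T_\varepsilon$ acts by a scalar (either $1$ or $\varepsilon^{-1}$) on each such coordinate. Hence $T_\varepsilon(X)=T_\varepsilon\cdot X$ is coordinatewise multiplication by a fixed element $T_\varepsilon\in\mathbb R^s\times\mathbb C^t$ with $\operatorname{Nm}T_\varepsilon=\varepsilon^{-q}$. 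This places the problem squarely inside the original Nikichine--Skriganov framework (their Theorem~7.1, recorded here as Theorem~\ref{t:NS71}), and the bound $|R_T(S,v)|\le C_\delta|\operatorname{Nm}T|^{(n-\ell)/(n-\ell+2)}\ln^{s+t+\delta}(2+|\operatorname{Nm}T|)$ yields the result immediately upon substituting $|\operatorname{Nm}T_\varepsilon|=\varepsilon^{-q}$ and $n\mapsto n-r$.

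So your argument is correct in outline and would go through, but you are re-deriving inside the proof what the paper recognizes as a direct citation. The ``main obstacle'' you identify is not an obstacle at all once one sees that good position makes the anisotropic map diagonal in the algebraic coordinates; this is the one idea your write-up is missing.
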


In particular, in the case when $K$ is a totally real algebraic number field, we obtain that $\ell=n-r$, $S_{\gamma^*}$ is a parallelepiped and, for any $\delta>0$,
\[
|R_\varepsilon (S)|<C_\delta |\ln \varepsilon|^{n-r+\delta}.
\]

If $r=p$, we have
\[
\gamma=-q+\frac{2q}{q-\ell+2}=-\frac{q(q-\ell)}{q-\ell+2},
\]
that agrees with the estimate proved in \cite[Theorem
7.1]{Nikichine-Skriganov98}.

The remaining part of the paper is devoted to the proof of the results mentioned above. We use a method developed in 
\cite{lattice-points}, which is based on the Poisson summation formula.
The key point is Lemma~\ref{l:intersection} proved in Section~\ref{s:trivial}, which allows us to reduce our considerations to the case when 
$\Gamma_F$ is trivial. The proof of this lemma relies on some facts about lattices in Euclidean spaces, which are given in Section \ref{s:lattices}. The proofs of Theorems \ref{t:lattice_points-basic}, \ref{mainthm1},
\ref{t:lattice_points} and \ref{mainthm2} are given in Section~\ref{s:t123}, and the proofs of Theorems~\ref{t:anomaly1}
and~\ref{t:anomaly2} in Section~\ref{s:t56}.

\section{Proofs of the main results}
\subsection{Some facts about lattices}\label{s:lattices} In this section, we provide some necessary information about lattices. As above, let $E$ be a finite-dimensional real Euclidean space. Let $L$ be a lattice in $E$. We say that a subspace $X$ in $E$ is an $L$-subspace, if
$X$ is spanned by the elements of $L$. It is clear that, for any $L$-subspace $X$, the set $L(X):=L\cap X$ is a lattice in $X$.

\begin{lem}\label{l:lemX}
For any $L^*$-subspace $X$ in $E$, there is an inclusion
\begin{equation}\label{e:TS}
L \subset \bigsqcup_{\gamma^*\in (L^*(X))^*} (\gamma^*+X^\bot).
\end{equation}
Moreover, for any $\gamma^*\in (L^*(X))^*$, we have
\begin{equation}\label{e:TS2}
L_{\gamma^*}:=L \bigcap (\gamma^*+X^\bot)\neq \emptyset.
\end{equation}
\end{lem}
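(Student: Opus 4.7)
The plan is to realize both claims as statements about the orthogonal projection $\pi_X\colon E\to X$ restricted to $L$, and to identify its image with the dual lattice $(L^*(X))^*$. Inclusion \eqref{e:TS} is then the easy half, while the nonemptiness assertion \eqref{e:TS2} requires a harder duality input.

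For \eqref{e:TS}, I would write any $v\in L$ uniquely as $v=v_X+v_{X^\bot}$ via the orthogonal splitting $E=X\oplus X^\bot$ and check that $v_X$ already lies in $(L^*(X))^*$. Indeed, for every $w\in L^*(X)=L^*\cap X$ the pairing $(v_X,w)=(v,w)$ is an integer, since $v\in L$, $w\in L^*$, and $v_{X^\bot}\perp w$. Hence $v\in v_X+X^\bot$. Disjointness of the cosets $\gamma^*+X^\bot$ over distinct $\gamma^*\in(L^*(X))^*$ is automatic, since two such cosets project to distinct points of $X$.

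For \eqref{e:TS2} the crux is the duality principle that $X$ is an $L^*$-subspace if and only if $X^\bot$ is an $L$-subspace. I would prove one direction by choosing a $\mathbb Z$-basis $b_1,\dots,b_q$ of the lattice $L\cap X^\bot$, extending it to a $\mathbb Z$-basis $b_1,\dots,b_q,a_1,\dots,a_p$ of $L$, and observing that the corresponding dual basis vectors $a_1^*,\dots,a_p^*\in L^*$ automatically lie in $X$ (being orthogonal to every $b_j\in X^\bot$) and span it; the opposite direction follows by symmetry from $L^{**}=L$. Granted this, $L\cap X^\bot$ is a lattice in $X^\bot$, so $\pi_X(L)\cong L/(L\cap X^\bot)$ is a lattice of full rank in $X$.

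The first paragraph gives $\pi_X(L)\subset(L^*(X))^*$. For the reverse inclusion I would compute $\pi_X(L)^*$ directly: an element $y\in X$ lies in $\pi_X(L)^*$ iff $(y,\pi_X(v))=(y,v)\in\mathbb Z$ for every $v\in L$ (the $X^\bot$-component is killed by $y\in X$), iff $y\in L^*\cap X=L^*(X)$. Thus $\pi_X(L)^*=L^*(X)$, and double duality yields $\pi_X(L)=(L^*(X))^*$, so every $\gamma^*\in(L^*(X))^*$ is the $\pi_X$-image of some $v\in L$; such a $v$ belongs to $L\cap(\gamma^*+X^\bot)=L_{\gamma^*}$, proving \eqref{e:TS2}. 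The main obstacle is the duality principle above; once it is in place, the rest is routine bookkeeping with dual bases and orthogonal projections.
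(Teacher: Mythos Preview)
Your argument for \eqref{e:TS} is exactly the paper's: project $v\in L$ orthogonally to $X$ and observe that the projection pairs integrally with every element of $L^*\cap X$.

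For \eqref{e:TS2} the paper takes a shorter path: it simply invokes the ``simple fact'' that any linear functional on $X$ taking integer values on $L^*(X)=L^*\cap X$ extends to a linear functional on $E$ taking integer values on all of $L^*$ (equivalently: $L^*\cap X$ is a primitive sublattice of $L^*$, so a $\mathbb Z$-basis of it extends to one of $L^*$). Translated through the inner product, this says precisely that every $\gamma^*\in(L^*(X))^*$ is $\pi_X(v)$ for some $v\in L$, which is \eqref{e:TS2}.

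Your route is correct but more elaborate: you first establish the duality principle ``$X$ is an $L^*$-subspace $\Leftrightarrow$ $X^\bot$ is an $L$-subspace'', then use it to see that $\pi_X(L)$ has full rank, and finally compute $\pi_X(L)^*=L^*(X)$ and apply double duality to get the exact equality $\pi_X(L)=(L^*(X))^*$. This is self-contained and in fact proves more than the paper states here (the equality rather than just surjectivity onto $(L^*(X))^*$). Note, however, that your duality principle is precisely the first assertion of the paper's next lemma (Lemma~\ref{l:ort}), so you are effectively front-loading that result. One small remark: the full-rank step via the duality principle is not strictly necessary in your argument, since once you know $\pi_X(L)\subset(L^*(X))^*$ is discrete and compute $\pi_X(L)^*=L^*(X)$ (a lattice), full rank of $\pi_X(L)$ already follows, and double duality applies.
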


Here $(L^*(X))^*$ denotes the lattice in $X$, dual of $L^*(X)$:
\[
(L^*(X))^*=\{ \gamma^*\in X :(\gamma^*,L^*(X))\subset\mathbb Z\}.
\]

\begin{proof}
Let $k\in L$. Denote by $\pi_X : E\to X$ the orthogonal projection on $X$. Then, for any $\gamma\in
L^*(X)=L^*\cap X$, we have
\[
(\pi_X(k),\gamma)=(k,\gamma)\in {\mathbb Z}.
\]
Theferore, $\pi_X(k)\in (L^*(X))^*$, that immediately implies 
\eqref{e:TS}.

The relation \eqref{e:TS2} follows from a simple fact that any linear functional $\ell : X\to \mathbb R$ such that $\ell(L^*(X))=\ell(L^*\cap X)\subset \mathbb Z$ can be extended to a linear functional $\tilde\ell : E\to
\mathbb R$ such that $\tilde\ell(L^*)\subset \mathbb Z$.
\end{proof}

\begin{lem}\label{l:ort}
If $X$ is an $L^*$-subspace of $E$, then $X^\bot$ is an 
$L$-subspace of $E$ and
\[
{\rm vol}(X^\bot/L(X^\bot))={\rm
vol}(E/L){\rm vol}(X/L^*(X)).
\]
\end{lem}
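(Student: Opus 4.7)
The plan is to consider the orthogonal projection $\pi : E \to X$ and analyze how it interacts with $L$, extracting both claims from Lemma~\ref{l:lemX} together with the general covolume formula for short exact sequences of lattices and the duality relation between a lattice and its dual.

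First I would show that $X^\bot$ is an $L$-subspace. By Lemma~\ref{l:lemX}, the inclusion \eqref{e:TS} implies that $\pi(L)\subset (L^*(X))^*$, so $\pi(L)$ is a discrete subgroup of $X$. The kernel of $\pi|_L$ is precisely $L\cap X^\bot = L(X^\bot)$, giving a short exact sequence
\[
0 \to L(X^\bot) \to L \to \pi(L) \to 0.
\]
Comparing ranks, since $L$ has rank $n=\dim E$ and $\pi(L)$ has rank at most $\dim X$, we get that $L(X^\bot)$ has rank at least $\dim X^\bot$, hence exactly $\dim X^\bot$, so it spans $X^\bot$. This proves the first assertion.

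For the volume identity, the key is to upgrade the inclusion $\pi(L)\subset (L^*(X))^*$ to an equality, which is exactly what relation~\eqref{e:TS2} of Lemma~\ref{l:lemX} gives: for every $\gamma^*\in (L^*(X))^*$ the fiber $L\cap(\gamma^*+X^\bot)$ is nonempty, i.e.\ $\gamma^*\in\pi(L)$. Thus $\pi(L) = (L^*(X))^*$. Applying the multiplicativity of covolumes along the above short exact sequence (with respect to the Euclidean volumes on $E=X\oplus X^\bot$, $X$, and $X^\bot$), one obtains
\[
{\rm vol}(E/L) = {\rm vol}(X^\bot/L(X^\bot)) \cdot {\rm vol}\bigl(X/(L^*(X))^*\bigr).
\]

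Finally I would invoke the standard duality identity ${\rm vol}(X/L^*(X))\cdot{\rm vol}(X/(L^*(X))^*)=1$ for mutually dual lattices in the Euclidean space $X$. Substituting this into the previous equation and rearranging yields
\[
{\rm vol}(X^\bot/L(X^\bot)) = {\rm vol}(E/L)\cdot{\rm vol}(X/L^*(X)),
\]
as required. The only place where care is needed is making sure the Euclidean volumes on $X$ and $X^\bot$ multiply to the Euclidean volume on $E$ (which holds because the decomposition $E=X\oplus X^\bot$ is orthogonal) and that $\pi(L)$ really is all of $(L^*(X))^*$; both are handled by Lemma~\ref{l:lemX}, so no genuine obstacle arises beyond correctly assembling these pieces.
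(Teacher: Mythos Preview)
Your proof is correct and follows essentially the same route as the paper's: both use Lemma~\ref{l:lemX} to identify $\pi(L)$ with $(L^*(X))^*$, obtain the product relation ${\rm vol}(E/L)={\rm vol}(X^\bot/L(X^\bot))\cdot{\rm vol}(X/(L^*(X))^*)$, and then finish with the duality identity \eqref{e:vv}. The only difference is packaging: the paper carries out the covolume multiplicativity by an explicit choice of bases (lifting a basis of $(L^*(X))^*$ to $L$ and adjoining a basis of $L(X^\bot)$), whereas you invoke the short exact sequence formalism; and for the first assertion the paper reads off generators of $L\cap X^\bot$ directly from a dual basis of $L^*$ rather than via your rank-counting argument.
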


\begin{proof}
Let $X$ be an $L^*$-subspace of $E$. Let $(\ell_1, \ldots,
\ell_n)$ is a basis in $L^*$ such that $(\ell_1, \ldots, \ell_r)$ is a basis in  $L^*(X)$. Let $(f_1, \ldots, f_n)$ be the dual basis in $L$. It is easy to see that, for $j=r+1,\ldots,n$, the vector $f_{j}$ belongs to $X^\bot$. Therefore, $X^\bot$ is an $L$-subspace.

Now suppose, as above, that $(\ell_1, \ldots, \ell_r)$ is a basis in 
$L^*(X)$. Denote by $(\ell^*_1, \ldots, \ell^*_r)$ the dual basis in  $(L^*(X))^*$: $(\ell_i,\ell^*_j)=\delta_{ij}$ for any $i,j=1,\ldots,r$. Using \eqref{e:TS2}, for any
$i=1,\ldots,r$, we can choose some $k^*_i\in L$ such that 
$\pi_X(k^*_i)=\ell^*_i$. Let $(k^\bot_1, \ldots, k^\bot_{n-r})$ be a basis in $L(X^\bot)$. Using \eqref{e:TS}, it is easy to show that $(k^*_1,\ldots, k^*_r, k^\bot_1, \ldots, k^\bot_{n-r})$ is a basis in $L$. Therefore, for the volume of the parallelepiped spanned by the vectors $(k^*_1,\ldots, k^*_r,
k^\bot_1, \ldots, k^\bot_{n-r})$, we have the formula:
\[
{\rm vol}_n(k^*_1,\ldots, k^*_r, k^\bot_1, \ldots,
k^\bot_{n-r})={\rm vol}(E/L).
\]
On the other hand, since $k^*_i-\ell^*_i\in X^\bot$ for any
$i=1,\ldots,r$, and the sets $(\ell^*_1,\ldots, \ell^*_r)$ and
$(k^\bot_1, \ldots, k^\bot_{n-r})$ are mutually orthogonal, we get
\begin{align*}
{\rm vol}_n(k^*_1,\ldots, k^*_r, k^\bot_1, \ldots, k^\bot_{n-r})= &
{\rm vol}_n(\ell^*_1,\ldots, \ell^*_r, k^\bot_1, \ldots,
k^\bot_{n-r})\\ = & {\rm vol}_r(\ell^*_1,\ldots, \ell^*_r){\rm
vol}_{n-r}(k^\bot_1, \ldots, k^\bot_{n-r})\\ = & {\rm
vol}(X/(L^*(X))^*){\rm vol}(X^\bot/L(X^\bot)).
\end{align*}
It remains to apply the following well-known identity
\begin{equation}\label{e:vv}
{\rm vol}(X/L^*(X)){\rm vol}(X/(L^*(X))^*)=1.
\end{equation}
\end{proof}

\subsection{Reduction to the case when $\Gamma_F$ is trivial}\label{s:trivial}
In this section, we demonstrate how to reduce our considerations to the case when $\Gamma_F$ is trivial. As above, we assume that $E$ is a finite-dimensional real Euclidean space, $F$ is a linear subspace of $E$, $H=F^{\bot}$ is its orthogonal complement and 
$\Gamma\subset E$ is a lattice in $E$.

Let $V$ be the subspace of $E$ spanned by the elements of 
$\Gamma_F = \Gamma^*\cap F$. Thus, $V$ is a 
$\Gamma^*$-subspace of $E$ and $\Gamma_F = \Gamma^*(V)$.
It is clear that $V$ is a maximal $\Gamma^*$-subspace of $E$, which is contained in $F$.

By Lemma~\ref{l:ort}, the subspace $V^\bot$ is a $\Gamma$-subspace of $E$ and $\Gamma^\bot=\Gamma(V^\bot)$ is a lattice in $V^\bot$. It is easy to see that $V^\bot$ is the minimal $\Gamma$-subspace of $E$, which contains $H$.

First of all, observe that, thanks to \eqref{e:TS}, we have
\[
\Gamma \subset \bigsqcup_{\gamma^*\in \Gamma_F^*} P_{\gamma^*},
\]
moreover,  $\Gamma\cap P_{\gamma^*} \neq \emptyset$ for any
$\gamma^*\in \Gamma_F^*$. Therefore, for any bounded subset $S$ of $E$, we have the identity
\[
n_\varepsilon (S)=\sum_{\gamma^*\in \Gamma^*_F}\#
(T_\varepsilon(S)\cap (\Gamma\cap P_{\gamma^*})),
\]
where the sum in the right-hand side has finite number of non-vanishing terms.

The affine subspace $P_{\gamma^*}$ is identified with the linear space  $V^\bot$ by the map
\[
U_{\gamma^*} : v\in V^\bot \mapsto \gamma^*+v \in P_{\gamma^*}.
\]
The space $P_{\gamma^*}$ is invariant under the map 
$T_\varepsilon$, and the restriction of $T_\varepsilon$ to
$P_{\gamma^*}$ corresponds under the isomorphism $U_{\gamma^*}$ to the map $T^\bot_\varepsilon$ of $V^\bot$ defined by \eqref{e:Te} for the decomposition $V^\bot=(F\cap V^\bot)\oplus H$.

For any $\gamma^*\in \Gamma_F^*$, we fix an arbitrary element 
$k_{\gamma^*}\in \Gamma\cap P_{\gamma^*}$. Then
\[
\Gamma\cap P_{\gamma^*}=k_{\gamma^*}+\Gamma^\bot.
\]

Using these identifications, we obtain
\[
\# (T_\varepsilon(S)\cap (\Gamma\cap P_{\gamma^*}))=\#
((T^\bot_\varepsilon(S\cap P_{\gamma^*})-k_{\gamma^*})\cap
\Gamma^\bot),
\]
where, in the right-hand side, $S\cap P_{\gamma^*}$ is considered as an open subset of $V^\bot$. Therefore, if we introduce the notation 
\[
n_\varepsilon (S\cap P_{\gamma^*},-k_{\gamma^*})=\#
((T^\bot_\varepsilon(S\cap P_{\gamma^*})-k_{\gamma^*})\cap
\Gamma^\bot)
\]
we obtain finally that
\begin{equation}\label{e:ne-sum}
n_\varepsilon (S)=\sum_{\gamma^*\in \Gamma^*_F}n_\varepsilon (S\cap
P_{\gamma^*},-k_{\gamma^*}),
\end{equation}

An important fact about a relative position of the lattice
$\Gamma^\bot $ and the subspace $F\cap V^\bot$ of $V^\bot$ is given by the following lemma.

\begin{lem}\label{l:intersection}
We have
\[
{\Gamma^\bot}^*\cap (F\cap V^\bot)=\{0\},
\]
where ${\Gamma^\bot}^*\subset V^{\bot}$ is the dual lattice of 
$\Gamma^\bot $.
\end{lem}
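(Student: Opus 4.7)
The plan is to show that any $\xi\in{\Gamma^\bot}^*\cap(F\cap V^\bot)$ actually lies in $\Gamma^*\cap F=\Gamma_F\subset V$, and then use $V\cap V^\bot=\{0\}$ to conclude $\xi=0$. The central step is to extend $\xi$, viewed as a linear functional on $V^\bot$, to a $\Gamma^*$-functional on the whole of $E$.

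First I would set up the extension. Since $V^\bot$ is a $\Gamma$-subspace of $E$ (as observed just after Lemma \ref{l:ort}), the lattice $\Gamma^\bot=\Gamma\cap V^\bot$ is a lattice in $V^\bot$. Given $\xi\in{\Gamma^\bot}^*\cap(F\cap V^\bot)$, the functional $\ell:V^\bot\to\mathbb R$, $\ell(v)=(\xi,v)$, satisfies $\ell(\Gamma^\bot)\subset\mathbb Z$. Applying the dual of the extension principle used at the end of the proof of Lemma \ref{l:lemX} (choose a basis of $\Gamma^\bot$, complete it to a basis of $\Gamma$ using that $\Gamma/\Gamma^\bot$ is torsion-free since $V^\bot$ is a linear subspace, then extend $\ell$ by declaring it to vanish on the complementary basis vectors), I obtain an extension $\tilde\ell:E\to\mathbb R$ with $\tilde\ell(\Gamma)\subset\mathbb Z$. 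Via the inner product $\tilde\ell=(\tilde\xi,\cdot)$ for a unique $\tilde\xi\in E$, and the integrality condition gives $\tilde\xi\in\Gamma^*$.

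Next I would compare $\xi$ and $\tilde\xi$. The extension identity $\tilde\ell\big|_{V^\bot}=\ell$ reads $(\tilde\xi-\xi,v)=0$ for every $v\in V^\bot$, hence $\tilde\xi-\xi\in(V^\bot)^\bot=V$. Since $\xi\in F$ and $V\subset F$, we deduce $\tilde\xi\in F$, and so $\tilde\xi\in\Gamma^*\cap F=\Gamma_F$. Because $V$ is by definition the linear span of $\Gamma_F$, this forces $\tilde\xi\in V$, and then $\xi=\tilde\xi-(\tilde\xi-\xi)\in V$. Combined with $\xi\in V^\bot$, this yields $\xi\in V\cap V^\bot=\{0\}$.

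The only real point of difficulty is the extension step, producing $\tilde\xi\in\Gamma^*$ restricting to $\xi$ on $V^\bot$. This, however, is essentially the same lattice-theoretic fact already invoked in Lemma \ref{l:lemX} (just with the roles of $\Gamma$ and $\Gamma^*$ interchanged); the underlying algebraic input is that any linear subspace meets a lattice in a primitive sublattice, so the inclusion $\Gamma^\bot\hookrightarrow\Gamma$ splits and integer-valued functionals lift. Everything else is direct orthogonality bookkeeping together with the definition of $V$ as the span of $\Gamma_F$.
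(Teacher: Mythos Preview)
Your proof is correct and follows essentially the same route as the paper: the paper applies Lemma~\ref{l:lemX} directly with $L=\Gamma^*$ and $X=V^\bot$ to produce, for a given $\gamma^*\in{\Gamma^\bot}^*$, an element $\tilde\xi\in\Gamma^*$ with $\tilde\xi-\gamma^*\in V$, which is exactly the extension you construct by hand; the remaining argument ($\tilde\xi\in\Gamma^*\cap F=\Gamma_F\subset V$, hence $\gamma^*\in V\cap V^\bot=\{0\}$) is identical. The only cosmetic difference is that you spell out the extension step via primitivity and basis completion, whereas the paper simply cites the already-proved Lemma~\ref{l:lemX} with the roles of $\Gamma$ and $\Gamma^*$ swapped.
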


\begin{proof}
By Lemma~\ref{l:lemX} applied to the lattice $L=\Gamma^*$ and the
$\Gamma^*$-subspace $X=V^\bot$ we obtain that, for any
$\gamma^*\in {\Gamma^\bot}^*$,
\[
\Gamma^* \bigcap (\gamma^*+V)\neq \emptyset.
\]
Thus, given $\gamma^*\in {\Gamma^\bot}^*\cap F$, there exists $v\in V$ such that $\gamma^*+v\in \Gamma^*\cap
F=\Gamma_F$. Hence, $\gamma^*+v\in V$, and, therefore,
$\gamma^*\in V$. Since $\gamma^*\in V^\bot$, we get
$\gamma^*=0$.
\end{proof}

The formula \eqref{e:ne-sum} along with Lemma \ref{l:intersection} allow us to reduce our considerations to the case when $\Gamma_F$ is trivial. From now and later on, we will assume that 
\begin{equation}\label{e:ass}
\Gamma_F= \Gamma^*\cap F=\{0\}.
\end{equation}

\subsection{Asymptotic formula and remainder estimates}\label{s:t123}
The proofs of Theorems \ref{t:lattice_points-basic}, \ref{mainthm1}
and~\ref{t:lattice_points} under Condition \eqref{e:ass} make use of a method developed in \cite{lattice-points} and based on the Poisson summation formula. We describe briefly this method.

As above, let $F$ be a linear subspace of $E$ and 
$\Gamma\subset E$ be a lattice in $E$. Let $S$ be a bounded open subset of $E$. For any $\varepsilon>0$ and $v\in E$, we denote by $n_\varepsilon (S,v)$ the number of points of $\Gamma$ in the domain
$T_\varepsilon(S)+v$:
\begin{equation}\label{e:nSv}
n_\varepsilon (S,v)=\# ((T_\varepsilon(S)+v)\cap \Gamma).
\end{equation}
Under Condition \eqref{e:ass}, the asymptotic formula 
\eqref{f:lattice_points-basic} takes the form:
\begin{equation}\label{f:lattice_points-basic0}
n_\varepsilon (S,v) \sim \frac{\varepsilon^{-q}}{{\rm
vol}(E/\Gamma)} {\rm vol}_{n}(S), \quad \varepsilon \to 0.
\end{equation}
Therefore, the initial problem is reduced to the study of the remainder  $R_\varepsilon (S,v)$ in the formula~\eqref{f:lattice_points-basic0} given by
\begin{equation}\label{eq:Rv}
R_\varepsilon (S,v)=n_\varepsilon (S,v)
-\frac{\varepsilon^{-q}}{{\rm vol}(E/\Gamma)} {\rm vol}_{n}(S).
\end{equation}

Let $\chi_{S}$ be the indicator of $S$. It is easy to see that
\[
n_\varepsilon (S,v)=\sum_{\gamma\in
\Gamma}\chi_{T_\varepsilon(S)+v}(\gamma) = \sum_{\gamma\in \Gamma}
\chi_{S} (T_{\varepsilon^{-1}}(\gamma-v)).
\]

We will write the representation of a vector $x\in E$, corresponding to the decomposition $E=F\bigoplus H$, in the form:
\[
x=x_F+x_H, \quad x_F\in F, x_H\in H.
\]
Observe that
\[
T_{\varepsilon}(x)=x_F +\varepsilon^{-1}x_H.
\]

Let $\rho_{1,1} \in C^\infty_c(E)$ be a non-negative function such that ${\rm supp}\,\rho_{1,1}\subset B(0,1)=\{x\in E :
|x|\leq 1\}$ and $\int_{E} \rho_{1,1}(x)\,dx=1$. For any $t_F>0$ and
$t_H>0$, introduce a function $\rho_{t_F,t_H} \in C^\infty_c(E)$ by the formula
\begin{equation}\label{e:rho}
\rho_{t_F,t_H}(x)=\frac{1}{t_F^{p-r}t_H^{q}}\rho(t^{-1}_Fx_F+t^{-1}_Hx_H),
\quad x=x_F+x_H\in E.
\end{equation}
The function $\rho_{t_F,t_H}$ is supported in the ellipsoid
\[
B(0,t_F,t_H)=\left\{x\in E :
\frac{|x_F|^2}{t^2_F}+\frac{|x_H|^2}{t^2_H}<1\right\}.
\]

Define a function $n_{\varepsilon,t_F,t_H}(S,v)$ by
\[
n_{\varepsilon,t_F,t_H}(S,v)=\sum_{\gamma\in
\Gamma}(\chi_{T_\varepsilon(S)+v}\ast \rho_{t_F,t_H})(k),
\]
where $\chi_{T_\varepsilon(S)+v} \ast \rho_{t_F,t_H}\in
C^\infty_0(E)$ is defined by
\[
(\chi_{T_\varepsilon(S)+v} \ast \rho_{t_F,t_H})(y) =\int_{E}
\chi_{T_\varepsilon(S)+v}(y-x) \rho_{t_F,t_H}(x)\, dx, \quad y\in E.
\]

For any domain $S\subset E$ and for any $t_F>0$ and $t_H>0$,
let
\[
S_{t_F,t_H}=\bigcup_{x\in S}(x+B(0,t_F,t_H)),
\]
and
\[
S_{-t_F,-t_H} = E \setminus (E \setminus S)_{t_F,t_H}.
\]
It is easy to see that, for any $\varepsilon>0$, $t_F>0$ and $t_H>0$, there is a relation
\[
T_\varepsilon(S_{t_F,\varepsilon t_H})=(T_\varepsilon(S))_{t_F,t_H}.
\]

\begin{lem}[cf. Lemma 7 in \cite{lattice-points}]
For any $\varepsilon>0$, $t_F>0$ and $t_H>0$, the following inequalities hold:
\[
n_{\varepsilon,t_F,t_H}(S_{-t_F,-\varepsilon t_H},v) \leq
n_\varepsilon (S,v) \leq n_{\varepsilon,t_F,t_H}(S_{t_F,\varepsilon
t_H},v).
\]
\end{lem}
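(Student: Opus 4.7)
The plan is to prove both inequalities pointwise in $\gamma$, i.e., to compare the summands $\chi_{T_\varepsilon(S)+v}(\gamma)$ and $(\chi_{T_\varepsilon(S_{\pm t_F,\pm\varepsilon t_H})+v}\ast \rho_{t_F,t_H})(\gamma)$ before summing over $\gamma\in\Gamma$. The essential tools are (i) the key commutation $T_\varepsilon(S_{t_F,\varepsilon t_H})=(T_\varepsilon S)_{t_F,t_H}$ stated just before the lemma, (ii) the fact that $\rho_{t_F,t_H}$ is a non-negative function of total integral $1$ supported in the ellipsoid $B(0,t_F,t_H)$, and (iii) the symmetry $-B(0,t_F,t_H)=B(0,t_F,t_H)$.

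For the upper bound, I would argue that if $\gamma\in T_\varepsilon(S)+v$, then for every $x\in B(0,t_F,t_H)$ one has
\[
\gamma-x=(\gamma-v)+(-x)+v\in T_\varepsilon(S)+B(0,t_F,t_H)+v=(T_\varepsilon S)_{t_F,t_H}+v=T_\varepsilon(S_{t_F,\varepsilon t_H})+v,
\]
using the commutation identity. Hence $\chi_{T_\varepsilon(S_{t_F,\varepsilon t_H})+v}(\gamma-x)=1$ on the support of $\rho_{t_F,t_H}$, and integrating against $\rho_{t_F,t_H}$ (which has unit integral) yields $(\chi_{T_\varepsilon(S_{t_F,\varepsilon t_H})+v}\ast\rho_{t_F,t_H})(\gamma)=1\geq \chi_{T_\varepsilon(S)+v}(\gamma)$.

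For the lower bound I would argue contrapositively: suppose $(\chi_{T_\varepsilon(S_{-t_F,-\varepsilon t_H})+v}\ast\rho_{t_F,t_H})(\gamma)>0$. Then there exists $x\in B(0,t_F,t_H)$ with $\gamma-x-v\in T_\varepsilon(S_{-t_F,-\varepsilon t_H})=(T_\varepsilon S)_{-t_F,-t_H}$. By the definition $S_{-t_F,-t_H}=E\setminus(E\setminus S)_{t_F,t_H}$, membership in $(T_\varepsilon S)_{-t_F,-t_H}$ means exactly that adding any vector of $B(0,t_F,t_H)$ keeps the point inside $T_\varepsilon(S)$. Applying this with the particular vector $x$ itself gives $(\gamma-x-v)+x=\gamma-v\in T_\varepsilon(S)$, i.e.\ $\chi_{T_\varepsilon(S)+v}(\gamma)=1$. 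Since the convolved function takes values in $[0,1]$, this establishes the pointwise lower bound.

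Summing the two pointwise inequalities over $\gamma\in\Gamma$ yields the stated chain of inequalities. There is no real obstacle here; the only place one has to be careful is to verify the geometric equivalence between membership in $(T_\varepsilon S)_{-t_F,-t_H}$ and the inclusion of a full $B(0,t_F,t_H)$-translate in $T_\varepsilon(S)$, which follows directly from the definition together with the symmetry of the ellipsoid.
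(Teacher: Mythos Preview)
Your argument is correct and is precisely the standard proof of this type of ``sandwich'' inequality. The only point worth making explicit is that the commutation identity $T_\varepsilon(S_{-t_F,-\varepsilon t_H})=(T_\varepsilon S)_{-t_F,-t_H}$ for the \emph{negative} subscripts, which you use in the lower bound, is not stated in the paper; it follows at once from the positive version applied to $E\setminus S$ together with the bijectivity of $T_\varepsilon$, so this is not a gap.

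As for comparison with the paper: the paper does not actually give a proof of this lemma. It is stated with the reference ``cf.\ Lemma~7 in \cite{lattice-points}'' and then used without further justification. Your pointwise argument is exactly the expected one and is presumably what appears in that reference.
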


For any function $f$ from Schwartz space ${\mathcal S}(E)$, we define its Fourier transform $\hat{f}\in {\mathcal S}(E)$ by
\[
\hat{f}(\xi)=\int_{E}e^{-2\pi i(\xi,x)}f(x)\,dx.
\]

Recall the Poisson summation formula 
\begin{equation}\label{e:Poisson}
\sum_{k\in \Gamma} f(k)=\frac{1}{{\rm vol}(E/\Gamma)}\sum_{k^*\in
\Gamma^*}\hat{f}(k^*), \quad f\in {\mathcal S}(E),
\end{equation}
where $\Gamma^*\subset E$ is the dual lattice of $\Gamma$.
Applying the formula \eqref{e:Poisson} to the function
\begin{equation}\label{e:f}
f(x)=(\chi_{T_\varepsilon(S_{t_F,\varepsilon t_H})+v} \ast
\rho_{t_F,t_H})(x), \quad x\in E,
\end{equation}
and using the relations
\begin{gather*}
\hat\chi_{T_\varepsilon(S_{t_F,\varepsilon t_H})+v}(\xi)=
\varepsilon^{-q}e^{-2\pi i(\xi,v)} \hat\chi_{S_{t_F,\varepsilon
t_H}}(T_{\varepsilon}(\xi)), \quad
\xi\in E, \\
\hat\rho_{t_F,t_H}(\xi)=\hat\rho_{1,1}(t_F\xi_F+t_H\xi_H), \quad
\xi\in E,
\end{gather*}
we get
\begin{multline}\label{e:ne1}
n_{\varepsilon,t_F,t_H}(S_{t_F,\varepsilon t_H},v)
\\ = \frac{\varepsilon^{-q}}{{\rm vol}(E/\Gamma)}
\sum_{k\in \Gamma^*} e^{-2\pi i(k,v)} {\hat\chi}_{S_{t_F,
\varepsilon t_H}}( T_{\varepsilon}(k))\hat\rho_{1,1}(t_Fk_F+t_Hk_H).
\end{multline}

We write
\[
n_{\varepsilon,t_F,t_H}(S_{t_F,\varepsilon
t_H},v)=n^{\prime}_{\varepsilon,t_F,t_H} (S_{t_F,\varepsilon
t_H},v)+n^{\prime\prime}_{\varepsilon,t_F,t_H} (S_{t_F,\varepsilon
t_H},v),
\]
where
\begin{multline*}
n^{\prime}_{\varepsilon,t_F,t_H} (S_{t_F,\varepsilon t_H},v)\\
=\frac{\varepsilon^{-q}}{{\rm vol}(E/\Gamma)} \sum_{k\in \Gamma^*,
k_H= 0} e^{-2\pi i(k,v)} {\hat\chi}_{S_{t_F,\varepsilon t_H}}(
T_{\varepsilon}(k))\hat\rho_{1,1}(t_Fk_F+t_Hk_H)
\end{multline*}
and
\begin{multline*}
n^{\prime\prime}_{\varepsilon,t_F,t_H}
(S_{t_F,\varepsilon t_H},v)\\
=\frac{\varepsilon^{-q}}{{\rm vol}(E/\Gamma)} \sum_{k\in \Gamma^*,
k_H\neq 0} e^{-2\pi i(k,v)} {\hat\chi}_{S_{t_F,\varepsilon t_H}}(
T_{\varepsilon}(k)) \hat\rho_{1,1}(t_Fk_F+t_Hk_H).
\end{multline*}

Let $k\in \Gamma^*$ be such that $k_H=0$. Then $k\in \Gamma^*\cap
F$, and, by \eqref{e:ass}, we obtain that $k=0$. It is in this place that we heavily use Condition \eqref{e:ass} and, thereby, Lemma \ref{l:intersection}. Thus, we get
\[
n^{\prime}_{\varepsilon,t_F,t_H} (S_{t_F,\varepsilon t_H},v) =
\frac{\varepsilon^{-q}}{{\rm vol}(E/\Gamma)}
\hat\chi_{S_{t_F,\varepsilon t_H}} (0) =
\frac{\varepsilon^{-q}}{{\rm vol}(E/\Gamma)}{\rm vol}_{n}
(S_{t_F,\varepsilon t_H}).
\]
Using the estimate
\[
{\rm vol}_{n} (S_{t_F,\varepsilon t_H} \setminus S)\leq C
(t_F+t_H\varepsilon),
\]
we obtain
\[
|n^{\prime}_{\varepsilon,t_F,t_H} (S_{t_F,\varepsilon t_H},v)-
\frac{\varepsilon^{-q}}{{\rm vol}(E/\Gamma)}{\rm vol}_{n} (S)|\leq
C (t_F+t_H\varepsilon).
\]
Thus, for the remainder $R_\varepsilon (S,v)$ in the formula~\eqref{f:lattice_points-basic0} given by~\eqref{eq:Rv}, we derive the estimate
\begin{equation}\label{eq:R}
|R_\varepsilon (S,v)|\leq \frac{\varepsilon^{-q}}{{\rm
vol}(E/\Gamma)}\Big( C (t_F+t_H\varepsilon) + |R^+_\varepsilon
(t_F,t_H)| + |R^-_\varepsilon (t_F,t_H)|\Big),
\end{equation}
where
\[
R^\pm_\varepsilon (t_F,t_H) =\sum_{k\in \Gamma^*, k_H\neq 0}e^{-2\pi
i(k,v)} {\hat\chi}_{S_{\pm t_F, \pm \varepsilon t_H}}(
T_{\varepsilon}(k)) \hat\rho_{1,1}(t_Fk_F+t_Hk_H).
\]

To complete the proofs of Theorems
\ref{t:lattice_points-basic}, \ref{mainthm1}
and~\ref{t:lattice_points}, it remains to estimate 
$R^\pm_\varepsilon (t_F,t_H)$. For this, we can use well-known estimates of the Fourier transform of the indicator of a bounded domain with smooth or strictly convex boundary. In the case when $E$ is the space ${\mathbb R}^n$ equipped with the standard Euclidean structure and $\Gamma={\mathbb Z}^n$, this is done in \cite{lattice-points}, \cite{varchenko}. The arguments of these papers can be easily extended to the case under consideration.

For the proof of Theorem~\ref{mainthm2}, we can use the formula  \eqref{e:Ne} and apply Theorem~\ref{t:lattice_points} in the situation when $E=\mathcal E^*$, $\Gamma=\Lambda^*$ and $S$ is the ball
$B_{\sqrt{\lambda}}(0)$. We only observe the identity
\[
{\rm vol}(V^\bot/\Gamma^\bot)={\rm vol}(\mathcal E/\Lambda^*){\rm
vol}(V/\Lambda\cap F)=\frac{{\rm vol}(V/\Lambda\cap F)}{ {\rm
vol}(\mathcal E/\Lambda)},
\]
which follows from Lemma \ref{l:ort} and \eqref{e:vv}.

\subsection{Anomalously small remainder estimates}\label{s:t56} This section is devoted to the proof of Theorems~\ref{t:anomaly1} and~\ref{t:anomaly2}. If Condition \eqref{e:ass} holds, these theorems are direct consequences of the results of \cite{Skriganov94,Nikichine-Skriganov98}.

Let us begin with the proof of Theorem~\ref{t:anomaly1}. Suppose that Condition \eqref{e:ass} holds. Let $S$ be a rectangular parallelepiped
in $E$ with edges parallel to vectors $e_{1},\ldots,e_{n}$ in $E$ such that
$(e_{1},\ldots,e_{p})$ is an orthonormal basis in $F$ and 
$(e_{p+1},\ldots,e_{n})$  is an orthonormal basis in $H$.
The orthonormal basis $(e_{1},\ldots,e_{n})$ gives rise to an isomorphism of Euclidean spaces $E\cong \mathbb R^n$, which sends the parallelepiped $S$ to the parallelepiped 
$S_0=\prod_{j=1}^n[a_j, b_j]$ and the lattice $\Gamma$ to
a lattice $\Gamma_0$ in $\mathbb R^n$, which is in good position with respect to $S_0$ (admissible in the sense of \cite{Skriganov94}). Under this isomorphism, the parallelepiped $T_\varepsilon(S)+v$ goes to the parallelepiped
\[
T_\varepsilon(S_0)+v=\prod_{j=1}^p[a_j+v_j, b_j+v_j]
\prod_{j=p+1}^n[\varepsilon^{-1}a_j+v_j, \varepsilon^{-1}b_j+v_j],
\]
where $v=\sum_{j=1}^nv_je_j$. It can be represented in the form 
\[
T_\varepsilon(S_0)+v=T\cdot \mathbb K^n+X,
\]
where $\mathbb K^n=\prod_{j=1}^n[-\frac12, \frac12]$ is the unit cube in $\mathbb R^n$,
\[
T=(t_1,\ldots,t_n), \quad t_j=\begin{cases} b_j-a_j, & j=1,\ldots,p,\\
\varepsilon^{-1}(b_j-a_j), & j=p+1,\ldots,n,
\end{cases}
\]
and
\[
X=(X_1,\ldots,X_n), \quad X_j=\begin{cases} \frac{b_j+a_j}{2}+v_j, & j=1,\ldots,p,\\
\varepsilon^{-1}\frac{b_j+a_j}{2}+v_j, & j=p+1,\ldots,n.
\end{cases}
\]
Therefore, one can apply Theorem 1.1 from \cite{Skriganov94},
which implies that
\[
R_\varepsilon (S,v)<c[\log (2+|\operatorname{Nm}T|)]^{n-1}<C|\log
\varepsilon|^{n-1}.
\]

The proof of Theorem~\ref{t:anomaly2} when Condition \eqref{e:ass} holds is a direct consequence of the following result (see
Theorem~\ref{t:NS71} below), which is a minor generalization of Theorem 7.1 in \cite{Nikichine-Skriganov98}.

Let us consider a finite-dimensional Euclidean space $E$ represented as a orthogonal direct sum 
\begin{equation}\label{e:decomp2}
E=\bigoplus_{j=1}^{\ell} E_j,
\end{equation}
where $E_j$ is a subspace in $E$ of dimension $m_j$,
$j=1,\ldots,\ell$.
Suppose that a lattice $\Gamma \subset E$ is an algebraic lattice in $E$, corresponding to a complete $\mathbb Z$-module $M$ in an algebraic number field $K$ and an orthonormal basis $(e_1,\ldots,e_{n})$ in $E$.
Then we have an isomorphism 
\[
E\cong {\mathbb R}^{s}\times {\mathbb C}^{t},\quad s+2t=n,
\]
which defines a norm function ${\rm Nm}_{E}$ in
$E$: for any $X\in E$, corresponding under the above isomorphism to  $(x^\prime_1,\ldots,x^\prime_{s},
x^{\prime\prime}_1,\ldots, x^{\prime\prime}_{t})\in {\mathbb
R}^{s}\times {\mathbb C}^{t}$,
\[
\operatorname{Nm}_{E} X=\prod_{\alpha=1}^{s}|x^\prime_\alpha |
\prod_{\beta=1}^{t}|x^{\prime\prime}_\beta |^2.
\]

We define the product of vectors $X=(x^\prime_1,\ldots,x^\prime_{s},
x^{\prime\prime}_1,\ldots, x^{\prime\prime}_{t})\in {\mathbb
R}^{s}\times {\mathbb C}^{t}$ and $Y=(y^\prime_1,\ldots,y^\prime_{s},
y^{\prime\prime}_1,\ldots, y^{\prime\prime}_{t})\in {\mathbb
R}^{s}\times {\mathbb C}^{t}$ as follows: \[X\cdot Y=
(x^\prime_1y^\prime_1,\ldots,x^\prime_{s}y^\prime_{s},
x^{\prime\prime}_1y^{\prime\prime}_1,\ldots,
x^{\prime\prime}_{t}y^{\prime\prime}_{t})\in {\mathbb R}^{s}\times
{\mathbb C}^{t}.\] It is easy to see that $\operatorname{Nm}_{E} X\cdot Y
=\operatorname{Nm}_{E} X\operatorname{Nm}_{E}Y$.

For an open set $S$ in $E\cong {\mathbb R}^{s}\times {\mathbb
C}^{t}$ and a vector $T\in {\mathbb R}^{s}\times {\mathbb C}^{t}$, denote by $T\cdot S$ the set obtained by the multiplying each point of $S$ by $T$, $\operatorname{Nm} T\neq 0$. Observe that
${\rm vol}\,(T\cdot S)=|\operatorname{Nm} T| {\rm vol}\,(S)$

Suppose that a lattice $\Gamma$ is in good position with respect to the decomposition \eqref{e:decomp2}, that is,
for each embedding $\sigma_v$, the image $\sigma_v(K)$ of $K$ is entirely contained in one of subspaces $E_j, j=1,\ldots,\ell$. Then, for any $j=1,\ldots,\ell$, the subspace $E_j$ is invariant under the multlitiplication by $T\in {\mathbb R}^{s}\times {\mathbb C}^{t}$. In particular, there is a natural isomorphism
\[
E_j\cong {\mathbb R}^{s_j}\times {\mathbb C}^{t_j},\quad
s_j+2t_j=m_j.
\]

Let $S$ be a bounded open subset in $E$. For any
$T\in {\mathbb R}^{s}\times {\mathbb C}^{t}$ and $v\in E$, we denote by $n_T (S,v)$ the number of points of $\Gamma$ in the domain $T\cdot
S+v$:
\begin{equation}\label{e:nST}
n_T (S,v):=\# ((T\cdot S)+v)\cap \Gamma),
\end{equation}
and by $R_T (S,v)$ the remainder in the asymptotic formula for $n_T (S,v)$:
\[
R_T (S,v):=n_T (S,v) -\frac{1}{{\rm vol}(E/\Gamma)} {\rm
vol}\,(T\cdot S)=n_T (S,v) -\frac{|\operatorname{Nm} T|}{{\rm
vol}(E/\Gamma)} {\rm vol}\,(S).
\]

Assume that the set $S\subset E$ is a cartesian product  $S=\prod_{j=1}^\ell S_{j}$, where, for any $j$, the set $S_{j}$ is a strictly convex bounded open subset in $E_j$.

\begin{thm}\label{t:NS71} For any $\delta>0$, the following estimate holds:
\[ |R_T (S,v)|<C_\delta |\operatorname{Nm}
T|^{\frac{n-\ell}{n-\ell+2}}\ln^{s+t+\delta}(2+|\operatorname{Nm}
T|).
\]
\end{thm}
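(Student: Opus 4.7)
The plan is to adapt the Poisson summation method from Section~\ref{s:t123} to the multiplicative action $X \mapsto T\cdot X$ on $\mathbb{R}^s\times\mathbb{C}^t$, and then to invoke the arithmetic estimate on algebraic lattices developed in \cite{Nikichine-Skriganov98}. The new features relative to Section~\ref{s:t123} are that the smoothing must respect the product decomposition $S=\prod_j S_j$ and that the sum over the dual lattice must be controlled using the number-field structure of $\Gamma$.

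First I would build a product smoothing kernel $\rho_{\mathbf t}(x)=\prod_{j=1}^\ell t_j^{-m_j}\rho_j(x_j/t_j)$ with $\rho_j\in C^\infty_c(E_j)$ supported in the unit ball of $E_j$, and sandwich $n_T(S,v)$ between the smoothed counts for the inflated and deflated bodies $T\cdot S_{\pm\mathbf t}$. Applying the Poisson summation formula \eqref{e:Poisson} and isolating the $k=0$ term yields the main term $|\operatorname{Nm}T|\operatorname{vol}(S)/\operatorname{vol}(E/\Gamma)$ together with a boundary error of order $|\operatorname{Nm}T|\sum_j t_j$ and the Fourier sum
\[
\Sigma_\pm(T,v)=\sum_{\substack{k\in\Gamma^*\\k\ne 0}}e^{-2\pi i(k,v)}\,\hat\chi_{T\cdot S_{\pm\mathbf t}}(k)\,\hat\rho_{\mathbf t}(k).
\]
The product structure factors the Fourier transform as $\hat\chi_{T\cdot S}(k)=|\operatorname{Nm}T|\prod_j\hat\chi_{S_j}(T_j\cdot k_j)$, and the strict convexity of each $S_j$ combined with Herz's estimate provides $|\hat\chi_{S_j}(\eta)|\le C_j(1+|\eta|)^{-(m_j+1)/2}$.

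The decisive step is the arithmetic bound on $\Sigma_\pm$. Because $\Gamma$ is algebraic and in good position with respect to \eqref{e:decomp2}, the dual lattice $\Gamma^*$ is again algebraic with the same good-position property, and every nonzero $k\in\Gamma^*$ satisfies a uniform lower bound $|\operatorname{Nm}_E k|\ge c>0$. Via the arithmetic--geometric mean inequality this converts the product of Fourier decays into a summable quantity whose dyadic-shell decomposition is indexed by the logarithm of the unit group of $K$; since the log-unit lattice has rank $s+t-1$, the resulting shell count produces the factor $\ln^{s+t+\delta}(2+|\operatorname{Nm}T|)$. Balancing this estimate against the boundary error by optimizing $t_1,\ldots,t_\ell$ yields the claimed exponent $|\operatorname{Nm}T|^{(n-\ell)/(n-\ell+2)}$.

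The main obstacle is this arithmetic summation: reducing the $\ell$-fold product of convex-body Fourier decays to a summable bound over an algebraic lattice requires the delicate dyadic decomposition developed in \cite{Nikichine-Skriganov98}. Since the present statement differs from Theorem~7.1 of that paper only in that the subspaces $E_j$ may have arbitrary dimensions $m_j$ and in allowing a translation vector $v$, once the Poisson reduction above is in place the substantive remaining task is to verify that the Nikichine--Skriganov arguments go through verbatim in this slightly more general setting.
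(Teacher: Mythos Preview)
Your proposal is correct and follows essentially the same route as the paper: the paper's own ``proof'' of Theorem~\ref{t:NS71} consists of the single observation that it is obtained by a straightforward generalization of the proof of Theorem~2.2 (and Theorem~7.1) in \cite{Nikichine-Skriganov98}, and your outline is precisely a sketch of what that generalization entails---product smoothing, Poisson summation, Herz-type decay for each strictly convex factor, and the arithmetic summation over the algebraic dual lattice controlled via the norm lower bound and the log-unit lattice---before likewise deferring the technical core to \cite{Nikichine-Skriganov98}. If anything, you have supplied more detail than the paper does.
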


The proof of Theorem \ref{t:NS71} is obtained by a straightforward generalization of the proof of Theorem
2.2 in \cite{Nikichine-Skriganov98} (see also the proof of Theorem 7.1 in \cite{Nikichine-Skriganov98}).

Now the proof of Theorem~\ref{t:anomaly2} is completed as follows. Suppose that Condition \eqref{e:ass} is valid. In the case under consideration, the operator $T_\varepsilon$ is represented as  $T_\varepsilon(X)=T_\varepsilon\cdot X$ with some
$T_\varepsilon\in {\mathbb R}^{s}\times {\mathbb C}^{t}$, moreover,
$\operatorname{Nm} T_\varepsilon = \varepsilon^{-q}$. Therefore, Theorem~\ref{t:anomaly2} follows immediately from Theorem~\ref{t:NS71}:
\[
|R_{T_\varepsilon} (S,v)|=\left|n_{\varepsilon} (S,v)
-\frac{\varepsilon^{-q}}{{\rm vol}(E/\Gamma)} {\rm vol}\,(S)\right|
<\varepsilon^{-\frac{q(n-r-\ell)}{n-r-\ell+2}}|\ln
\varepsilon|^{s+t+\delta}.
\]

\end{document}